\documentclass{amsart}

\usepackage{rotating}
\usepackage{amsmath, amsthm, amssymb, fullpage, xypic}
\usepackage[all, knot]{xy}
\usepackage{multirow}
\xyoption{poly}

\newtheorem{theorem}{Theorem}
\newtheorem{lemma}[theorem]{Lemma}
\newtheorem{proposition}[theorem]{Proposition}
\newtheorem{definition}[theorem]{Definition} 
\newtheorem{corollary}[theorem]{Corollary}



\newcommand{\Mdef}[2]{\newcommand{#1}{\relax\ifmmode #2 \else $#2$\fi}}

\Mdef{\cV}{\mathcal{V}}
\Mdef{\cF}{\mathcal{F}}
\Mdef{\cE}{\mathcal{E}}
\Mdef{\cP}{\mathcal{P}}
\Mdef{\cS}{\mathcal{S}}
\newcommand{\R}{\mathbb{R}}
\newcommand{\Z}{\mathbb{Z}}


\DeclareMathOperator{\Cok}{Cok}
\DeclareMathOperator{\Ker}{Ker}

\newcommand{\edit}{}        

\newcommand{\lra}{\longrightarrow}

\newcommand{\bv}{\left[\begin{array}{r}}
\newcommand{\brr}{\left[\begin{array}{rr}}
\newcommand{\ev}{\end{array}\right]}

\begin{document}
\title{Generalized {\em Tonnetze}}
\date{July 10, 2011}
\author[M.J.~Catanzaro]{Michael J.\ Catanzaro}
\address{Department of Mathematics, Wayne State University, Detroit, Michigan  48202}
\email{mike@math.wayne.edu}
\begin{abstract}
We study a generalization of the classical Riemannian {\it Tonnetz}
to $N$-tone equally tempered scales (for all $N$) and arbitrary
triads.
We classify all the spaces which result.
The torus turns out to be the most common possibility,
especially as $N$ grows.  Other spaces include 2-simplices, tetrahedra
boundaries, and the harmonic strip
(in both its cylinder and M\"obius band variants).
The final and most exotic space we find is something we call
a `circle of tetrahedra boundaries'.  These are the {\em Tonnetze}
for spaces of triads which contain a tritone. They are closely
related to Peck's Klein bottle {\em Tonnetz}.
\end{abstract}
\maketitle
{\bf Keywords:} {\em Tonnetz}, $N$-tone equal temperament, space of triads, torus, simpicial complex, geometry of chords, generalized scales

\section{Introduction}

Geometric aspects of chord spaces have been considered by many authors, for instance \cite{Hyer,Tymoczko,Cohn,Cohn2,Douthett,Gollin,
Mazzola,Peck}.
We continue this tradition by enumerating all possible geometries
that arise as {\it generalized Tonnetze} in the following way. To any fixed, unordered, three-note chord $\{a,b,c\}$ in
$\mathbb{Z}/N$ one may associate a 2-dimensional simplicial complex. Its 2-simplices are the mod $N$ transpositions and
inversions of $\{a,b,c\}$, its 1-simplices are two-element subsets of transpositions and inversions of $\{a,b,c\}$, and a
0-simplex is an element of $\mathbb{Z}/N$. In Theorem~\ref{sumthm},  we prove that these simplicial 
complexes comprise the following list: 2-simplices, tetrahedra boundaries, tori, cylinders, M\"obius bands,
and circles of tetrahedra boundaries.

A prime example of such a simplicial complex is the neo-Riemannian {\it Tonnetz}, which is obtained by taking $\{a,b,c\}$ to
be any major or minor triad. The 0-simplices are the elements of $\mathbb{Z}/{12}$, while the 1-simplices are major thirds
$\{x,x+4\}$, minor thirds $\{y,y+3\}$ and perfect fifths $\{z,z+7\}$ for $x,y,z \in \mathbb{Z}/{12}$. The 2-simplices are the
major and minor triads. We call this simplicial complex $C(3,4,5)$ because the step intervals of the minor chord are 3, 4, and
5. See Figure~\ref{torus} for a representation of this torus in the plane (opposite sides are identified to make the torus).  We
emphasize that the construction of such simplicial complexes is direct, {\it it does not rely on any planar representation as
an intermediate step}.

Indeed, there may not even be a canonical planar representation. For example, if we begin with $\{0,1,3\}$ in the mod 6
universe $\mathbb{Z}/6$, then the resulting simplicial complex consists of boundaries of three tetrahedra boundaries
\begin{eqnarray*}
\{0134\} & = & \{\{013\}, \{014\}, \{034\}, \{134\} \} \\
\{1245\} & = & \{\{124\}, \{125\}, \{145\}, \{245\} \} \\
\{2350\} & = & \{\{235\}, \{230\}, \{250\}, \{350\} \} 
\end{eqnarray*}
glued together along 1-simplices of the form $\{x,x+3\}$. We call this simplicial complex $C(1,2,3)$ because the step
intervals of $\{0,1,3\}$ in $\mathbb{Z}/6$, from smallest to largest, are 1, 2, and 3.

It turns out that $C(1,2,3)$ is closely related to Peck's Klein bottle {\em Tonnetz} \cite{Peck} in a manner
explained in detail in Section~\ref{exsptr}. Briefly, our space $C(1,2,3)$ is not a surface
because the tritone intervals are shared by four 2-simplices. In Peck's Klein bottle {\em Tonnetz},
each of these tritones is represented by two 1-simplices, each of which is shared by two 2-simplices,
thereby forming a surface, the Klein bottle. As we show in Section~\ref{exsptr}, there are choices
involved in the process of splitting the tritones in two, which lead to 8 versions of Peck's Klein
bottle {\em Tonnetz}, half of which are Klein bottles, and half of which are tori.

The simplicial complexes we consider may not even be connected. For example, the augmented triads form the 2-simplices of
$C(4,4,4)$, since they 
are characterized by the fact that there are \edit{four} semi-tones between
each note in the chord.
$C(4,4,4)$ has four components, $\{C,E,G^{\sharp}\}$, $\{C^{\sharp},F,A\}$, 
$\{D,F^{\sharp}, A^{\sharp}\}$,
and $\{D^{\sharp}, G, B\}$, each of which consists of a single \edit{2-simplex} (i.e., chord). 
The complete symmetry of the augmented triad
means that we have only four of them in the chromatic scale, corresponding
to the four disjoint \edit{2-simplices} of $C(4,4,4)$. 

Another example of a non-connected generalized {\it Tonnetz} is $C(3,3,6)$, whose 2-simplices are the diminished triads,
since the \edit{diminished triad} consists of three notes which
differ by three, three, and six semi-tones. The space they form, $C(3,3,6)$, can be decomposed into
three disjoint copies of $C(1,1,2)$, which is a tetrahedron. 
Each tetrahedron contains four 
of the twelve possible diminished \edit{triads}. For example, one of the three
contains the 4 chords \edit{$\{C,D^{\sharp},F^{\sharp}\}$, $\{F^{\sharp},A,C\}$,
$\{A,F^{\sharp}, D^{\sharp}\}$, $\{A,C,D^{\sharp}\}$}.
The remaining diminished \edit{triads} fall into two other
families like this, and these families share no notes.

After these illustrations, we may now state our main theorems precisely.
Let $n_1$, $n_2$, $n_3$, and $N$ be positive integers such that $n_1+n_2+n_3=N$ and $1 \leq n_1 \leq n_2 \leq n_3 < N$. Then
the connected {\bf components} of the generalized {\it Tonnetz} $C(n_1,n_2,n_3)$ are 2-simplices, tetrahedra boundaries, tori,
cylinders, M\"obius bands, or circles of tetrahedra boundaries (Theorem~\ref{sumthm}). 
Moreover, any two connected components of $C(n_1,n_2,n_3)$ are isomorphic (Theorem~\ref{disj}). The
simplicial complex $C(n_1,n_2,n_3)$ is connected if and only if $gcd(n_1,n_2,n_3)=1$ (Theorem~\ref{connected}). 
The complete list of possibilities is contained in Figure~\ref{possibilities}, and is proved in Sections~\ref{surfwobdy},
\ref{surfwbdy}, and \ref{nonsurf}.

\begin{figure}
\renewcommand{\arraystretch}{1.2}
\begin{center}
\begin{tabular}{|c|c|}
  \hline
  Connected component of $C(n_1,n_2,n_3)$ & Relations on $n_1,n_2,$ and $n_3$ \\
   \hline
   2-simplex & $n_1 = n_2 = n_3$ \\
   \hline
   tetrahedron boundary& $n_1 = n_2 < n_3 = N/2$ \\
   \hline
   \multirow{2}{*}{cylinder} & $n_1 = n_2 < n_3 \neq N/2$ or \\
            & $n_1 < n_2 = n_3$ with $N$ even \\
   \hline
   \multirow{2}{*}{M\"obius band} &  $n_1 = n_2 < n_3 \neq N/2$ or \\
            & $n_1 < n_2 = n_3$ with $N$ odd \\
   \hline
   circle of $(n_1 +n_2)/\gcd(n_1,n_2)$& \multirow{2}{*}{$n_1 + n_2 = n_3 = N/2$} \\
   tetrahedra boundaries & \\
   \hline
   torus  & $n_1 < n_2 < n_3$ \\
   \hline
\end{tabular}
\end{center}
\caption{The classification of generalized {\em Tonnetze}.}
\label{possibilities}
\end{figure}

In Figure~\ref{12cases}, we list all the possibilities for $\mathbb{Z}/12$.  
Note that \edit{$\mathbb{Z}/{12}$} contains every possibility except the
M\"obius band.  This is the best we could expect, since the M\"obius band
occurs only in scales with an odd number of notes.  (See Theorem
\ref{nnnplusk}.)


\edit{The paper is organized as follows.
We begin with definitions and examples in Section~\ref{predef}.
Next, in Section~\ref{eulchar}, we count the number of vertices, edges, and
faces in $C(n_1,n_2,n_3)$. Using this information 
we determine the Euler characteristic of $C(n_1,n_2,n_3)$.
We use this to classify those spaces of triads which
are compact, connected, orientable surfaces in Section~\ref{classpa}. We discuss the connectedness of  
spaces of triads in Section~\ref{topprop}. If the space is not connected, it must consist of
disjoint, identical subspaces. This, along with  
homogeneity (using the operations of the $T/I$ group), allows us 
to classify the remaining spaces. 
We use ideas from geometry, algebraic topology, homological algebra, and 
elementary number theory.
The method of proof is explicit: we consider a
general vertex (pitch class) $a$ and all 2-simplices (triads) which contain it.
The discussion of connected spaces of triads naturally falls into
three cases: 
surfaces with boundary, surfaces without boundary, and spaces which are 
not surfaces.}

\begin{figure}
\renewcommand{\arraystretch}{1.2}
\begin{tabular}{|c|c|c|}
  \hline
  $C(n_1,n_2,n_3)$ & Simplicial Complex & Representative Trichord \\
  \hline
  $C(1,1,10)$ & cylinder & $\{ 0,1,2 \} = \{C,C^{\sharp},D\}$ \\
  $C(1,2,9)$ & torus & $\{ 0,1,3 \} = \{C,C^{\sharp},D^{\sharp}\} $\\
  $C(1,3,8)$ & torus & $\{ 0,1,4 \} = \{C,C^{\sharp},E\}$ \\
  $C(1,4,7)$ & torus & $\{ 0,1,5 \} = \{C,C^{\sharp},F\}$ \\
  $C(1,5,6)$ & circle of 6 tetrahedra boundaries & $ \{ 0,1,6 \} = \{C,C^{\sharp},F^{\sharp}\}$ \\
  $C(2,2,8)$ & two disjoint cylinders & $\{ 0,2,4 \} = \{C,D,E\}$  \\
  $C(2,3,7)$ & torus & $\{ 0,2,5 \} = \{C,D,F\}$ \\
  $C(2,4,6)$ & two disjoint circles of 3 tetrahedra boundaries & $\{ 0,2,6 \} = \{C,D,F^{\sharp}\}$ \\
  $C(2,5,5)$ & cylinder & $\{ 0,2,7 \} = \{C,D,G\}$  \\
  $C(3,3,6)$ & three disjoint tetrahedra boundaries & $\{ 0,3,6 \} = \{C,E^{\flat},G^{\flat}\}$ \\
  $C(3,4,5)$ & torus & $\{0,3,7 \} = \{C,E^{\flat},G\} $\\
  $C(4,4,4)$ & four disjoint \edit{2-simplices} & $\{ 0,4,8\} = \{C,E,G^{\sharp}\}$ \\
  \hline
\end{tabular}
\caption{The generalized {\em Tonnetze} found in $\Z/12$.}
\label{12cases}
\end{figure}


We can see that the most like case is a torus, by the following argument.
\edit{
The number of possible shapes $\{n_1,n_2,n_3\}$ in an $N$-note scale is
$aN^2 +$ lower degree terms. (In fact, $a= 1/12$ but this is not important). This follows
since the number of choices for each of $n_1$ and $n_2$ is proportional to $N$, and $n_3$
is then determined by $n_3 = N -n_1 - n_2$. Each of the non-torus cases is
determined by one or more equalities, such as $n_1 = n_2$. The number of 
triples $\{n_1,n_2,n_3\}$ satisfying this in addition is $bN + $constant, for some $b$.
Adding all the non-torus cases, we find their proportion is
\[
   \frac{cN + \mbox{ constant}}{aN^2 + \mbox{ linear } + \mbox{ constant}}
\]
which goes to 0 as $N \rightarrow \infty$.
Hence, as $N$ grows, the probability of \emph{not} finding a torus
decreases to zero.}


In Section~\ref{exsptr}, we compare our results to those of Balzano, Cohn, Mazzola,
and Peck. In the first three cases, our results are the same. Peck's Klein
bottle {\em Tonnetze}, on the other hand, can be mapped to our spaces, but are not identical.
 
\section{Preliminary Definitions}
\label{predef}

In order to help visualize these complexes, we arrange the $N$ note
scale symmetrically on a circle. We number the notes in the scale from 0
to $N-1$ in an increasing, clockwise fashion on the perimeter of the
circle. In Figure~\ref{345}, we see the musical circle for $\Z/12$
with a triad inscribed in it.

\begin{figure}
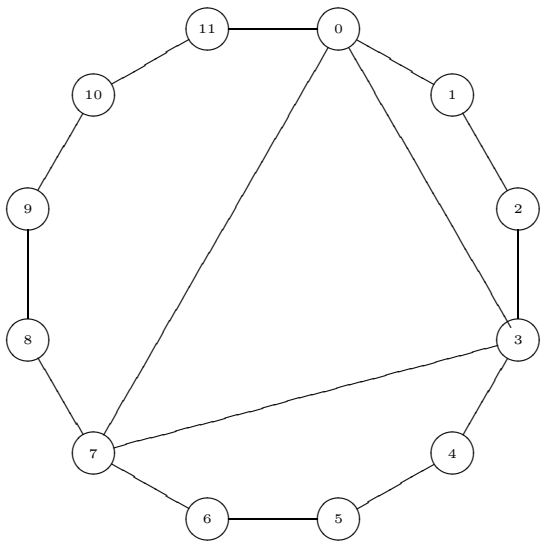

\[
\def\objectstyle{\scriptscriptstyle}
\xy
\xygraph{!{/r8pc/:}!P12"B"{\cir<8pt>{}}"B1"2"B2"1"B3"0"B4"{11}"B5"{10}"B6"9"B7"8"B8"7"B9"6"B10"5"B11"4"B12"3}
\PATH~={**\dir{-}}'"B3"'"B8"'"B12"'
\endxy
\]
\caption{The C-minor triad $\{0,3,7\}$ is a 2-simplex of $C(3,4,5)$.}
\label{345}
\end{figure}

Let us show $C(3,4,5)$ is the neo-Riemmanian {\it Tonnetz}
\edit{of major and minor triads in more detail.  This is} 
discussed in many sources including \cite{Riemann}, \cite{Cohn2}.
 Throughout this section, we \edit{will} be referring to the
 \edit{musical} circle in Figure~\ref{345}, \edit{with} the
 $\{0,3,7\}$ triad inscribed in it. Let us label \edit{pitch classes} in
 the \edit{usual way}:
\begin{center}
\begin{tabular}{|c|c|c|c|c|c|c|c|c|c|c|c|}
\hline
 0&1&2&3&4&5&6&7&8&9&10&11 \\
\hline 
 C&C$^{\sharp}$&D&D$^{\sharp}$&E&F&F$^{\sharp}$&G&G$^{\sharp}$&A&A$^{\sharp}$&B\\
 \hline
\end{tabular}
\end{center}
We see that the triad we have been referring to, \edit{$\{0,3,7\}$}, is the c-minor triad. 
\edit{Other common examples are $C = \{0,4,7\}$, $E = \{4,8,11\}$, and $F= \{0,5,9\}$.}
If we construct these \edit{2-simplices} from our musical circle and
\edit{attach} them as in Figure~\ref{glue}, we see the space in
Figure~\ref{torus}.
Let $1 \le n_1 \le n_2 \le n_3$ and $N = n_1 + n_2 + n_3$. A type of triad in an
$N$-note scale can be described by giving the \edit{distances} $n_1$,
$n_2$, and $n_3$ between the notes arranged \edit{cyclically}. Thus, a minor
triad \edit{in $\Z/{12}$} corresponds to $(n_1, n_2, n_3) = (3,4,5)$
since, for example, there are 3 half steps from C to E$^{\flat}$, 4 half
steps from E$^{\flat}$ to G, and 5 from G to C, always counting upward in
pitch. \edit{Since the number of notes in our scale is $N$, we shall use
the group $\Z/N$ of integers modulo $N$
to label the notes}.

\edit{The mathematical object we use to model a space of triads is that
of a simplicial complex, which is a set theoretic abstraction of
polyhedral geometry. 
Formally, a simplicial complex consists of a finite set $\cV$
of \emph{vertices} together with a set $\cS$ of subsets of $\cV$ 
called \emph{simplices} with the property that any subset of a simplex
is again a simplex.  We call subsets in $\cS$ with $n+1$ elements \emph{$n$-simplices}.
}

\edit{
Each simplicial complex has a geometric realization as a polyhedron, obtained by
associating to
each 0-simplex $\{v\}$ a point which we shall call $v$, 
to each 1-simplex $\{v, w\}$ an interval from $v$ to $w$,
to each 2-simplex $\{u,v,w\}$ a solid triangle bounded by the 1-simplices $\{u,v\}$,
$\{u,w\}$ and $\{v,w\}$ (so that its vertices are $u$, $v$ and $w$) and so on.
}

\edit{
The condition that subsets of simplices are themselves simplices implies that
as we build up the complex, the lower dimensional simplices we need as boundaries
of higher dimensional ones are already present.
}

\edit{For example, the simplicial complex with
$\cV = \{0,1,2\}$ and $\mathcal{S} = \{ \{0\}, \{1\}, \{2\} \}$
represents the vertices of a triangle. However, the complex
$\cV = \{ 0, 1, 2 \}$, $\mathcal{S} = \{ \{0, 1\}, \{1, 2\}, \{0\},
\{1\}, \{2\} \}$ represents the vertices as well as two edges of a
triangle. The geometric realization of these two examples can be seen in
Figure~\ref{simpcompex}.}

\begin{figure}
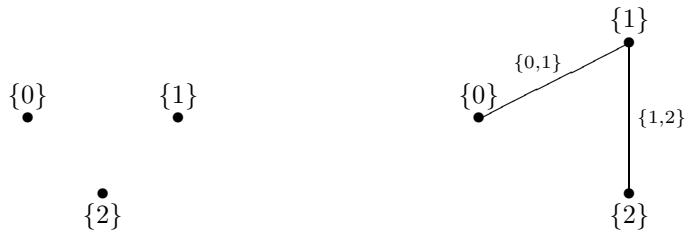

 \[
\xy 
(10,0)*{\bullet};(10,-3)*{\{2\}};(0,10)*{\bullet};(0,13)*{\{0\}};
(20,10)*{\bullet};(20,13)*{\{1\}};
(60,10)*{\bullet};(60,13)*{\{0\}};(80,0)*{\bullet};
(80,-3)*{\{2\}};(80,23)*{\{1\}};(80,20)*{\bullet}; 
\ar@{-} (60,10);(80,20)^{\{0,1\}}
\ar@{-} (80,20);(80,0)^{\{1,2\}}

\endxy
\]
\caption{Two examples of simplicial complexes}
\label{simpcompex}
\end{figure}

\edit{A simplicial complex consisting of all non-empty subsets of an
$(n+1)$ element set is called an \emph{$n-$simplex} because its
geometric realization is the simplest n-dimensional polyhedron. 
Any simplicial complex is the union of
the simplices contained within it.}

\edit{One advantage of simplicial complexes is that they allow one to study polyhedral topology by working with
finite sets. In particular, \emph{simplicial homology} is extraordinarily useful. 
We shall use it in Section~\ref{connectedness} to determine the number of connected components in 
our \emph{Tonnetze}, so we shall not be concerned with the general definition here. However, we would like to
suggest one idea. Intuitively, the $n-$th homology measures the `n-dimensional holes' in a space, so it
may be a useful tool in analyses. For example, the neo-Riemannian \emph{Tonnetz} forms a torus, so that there are two
independent paths on it, out of which all others can be composed. This suggests that there are essentially
two independent ways in which one can form chord progressions which start and end at the same chord
without repeating themselves. Naturally, this is mere speculation, and would require careful analysis to determine 
whether homology groups have musical significance (see \cite{Netske} for
more on this topic).}

We label the distances between notes in a least to greatest fashion, so
$1 \leq n_1 \leq n_2 \leq n_3$. If we denote the number of notes in the
scale by $N$, then $N=n_1 + n_2 + n_3$ since this corresponds to a complete
circuit of the musical circle for $C(n_1,n_2,n_3)$.

\begin{definition} \edit{Let $N$ be a positive integer and $n_1$, $n_2$, $n_3$ be positive integers such that $n_1 + n_2 + n_3 = N$ and $1 \le n_1 \le n_2 \leq n_3 < N$. We denote by $C(n_1,n_2,n_3)$ the abstract simplicial complex in which
  \begin{enumerate}
   \item the set of 0-simplices is $\mathbb{Z}/N$,
   \item the set of 1-simplices consists of all mod $N$ translations of $\{0,n_1\},\{0,n_2\},\{0,n_3\}$, and
   \item the set of 2-simplices consists of all mod $N$ translations and inversions of $\{0,n_1,n_1+n_2\}$.
   \end{enumerate}
In other words, the set of 2-simplices of $C(n_1,n_2,n_3)$ is the $T/I$-class of $\{0,n_1,n_1+n_2\}$ and the 0- and 1-simplices are the vertices and edges of these 2-simplices. The geometric realization of this abstract simplicial complex, which is also denoted $C(n_1,n_2,n_3)$, is then a simplicial complex called a \emph{space of triads}.}
\end{definition}

Let $\cF$ be the set of all faces, $\cE$ be the set of all edges, and $\cV$ be the set of all vertices of $C(n_1,n_2,n_3)$. To view $C(n_1,n_2,n_3)$ as a space
via geometric realization, take a solid triangle for each 2-simplex and join them along common edges. For example, in $C(3,4,5)$, the face $\{0,3,7\}$ and the face $\{3,7,10\}$ would be joined along the edge $\{3,7\}$ (see Figure~\ref{glue}). \\

\begin{figure}

\[
\xymatrix{
&
3
\ar@{-}[ddl]
\ar@{-}[ddr]
\ar@{<.>}[rr]
&&
3
\ar@{-}[rr]
\ar@{-}[ddr]
&&
10
\ar@{-}[ddl]
&&
3
\ar@{-}[ddl]
\ar@{-}[ddr]
\ar@{-}[rr]
&&
10
\ar@{-}[ddl]
\\
&&
\ar@{<.>}[r]
&&&
\ar@{=>}[r]
&&&&
\\
0
\ar@{-}[rr]
&&
7
\ar@{<.>}[rr]
&&
7
&&
0
\ar@{-}[rr]
&&
7
&&
\\
}
\]

\caption{Gluing the edges together.}
\label{glue}
\end{figure}
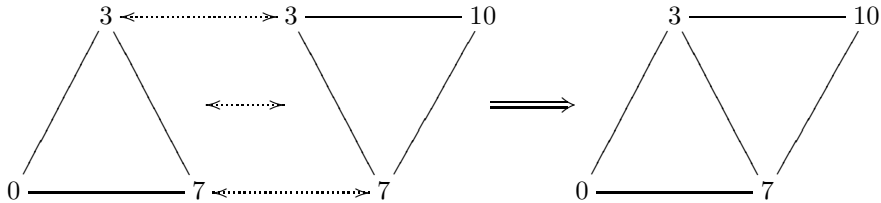

\section{Examples of Spaces of Triads}
\label{exsptr}

Let us show $C(3,4,5)$ is the neo-Riemmanian {\em Tonnetz} of
major and minor triads in more detail. In Figure~\ref{torus},
we have shown each \edit{2-simplex} (face) exactly once. Edges along the
boundary of our drawing occur twice and so are joined in the space
$C(3,4,5)$. The top and bottom vertices begin to repeat, and we see that
\edit{these edges} are actually equal. \edit{Similarly,} the left and right
sides are equal. Thus, we can take the top edge and glue it on to
the bottom. Similarly gluing the left and right edges, we see that we
have a torus. Hence, the space $C(3,4,5)$, of major and minor triads
 in \edit{$\Z/{12}$}, is a torus.
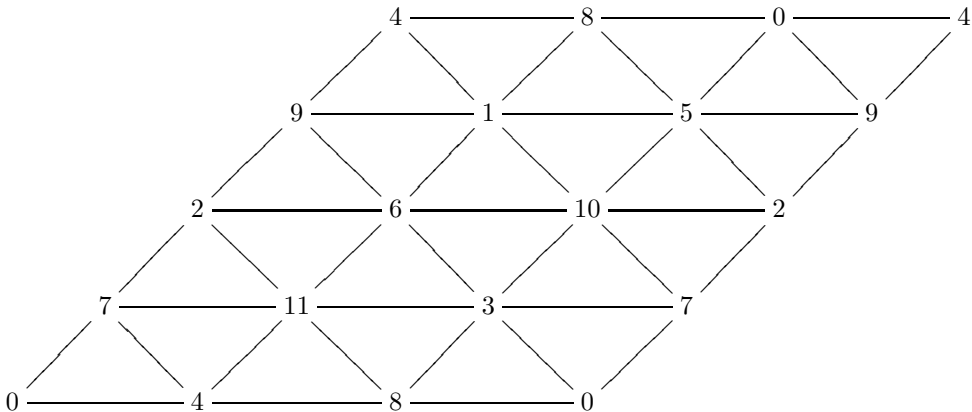
\begin{figure}
\[
\xymatrix{
&&&
&
4
\ar@{-}[dl]
\ar@{-}[dr]
\ar@{-}[rr]
&&
8
\ar@{-}[dl]
\ar@{-}[dr]
\ar@{-}[rr]
&&
0
\ar@{-}[dl]
\ar@{-}[rr]
\ar@{-}[dr]
&&
4
\ar@{-}[dl]
\\
&
&&
9
\ar@{-}[dl]
\ar@{-}[rr]
\ar@{-}[dr]
&&
1
\ar@{-}[dl]
\ar@{-}[rr]
\ar@{-}[dr]
&&
5
\ar@{-}[dl]
\ar@{-}[rr]
\ar@{-}[dr]
&&
9
\ar@{-}[dl]
\\
&&
2
\ar@{-}[dl]
\ar@{-}[rr]
\ar@{-}[dr]
&&
6
\ar@{-}[dl]
\ar@{-}[rr]
\ar@{-}[dr]
&&
10
\ar@{-}[dr]
\ar@{-}[dl]
\ar@{-}[rr]
&&
2
\ar@{-}[dl]
&&&
\\
&
7
\ar@{-}[dl]
\ar@{-}[rr]
\ar@{-}[dr]
&&
11
\ar@{-}[dl]
\ar@{-}[rr]
\ar@{-}[dr]
&&
3
\ar@{-}[dl]
\ar@{-}[rr]
\ar@{-}[dr]
&&
7
\ar@{-}[dl]
&&&&
\\
0
\ar@{-}[rr]
&&
4
\ar@{-}[rr]
&&
8
\ar@{-}[rr]
&&
0
&&&&&
\\
}
\]
\caption{A fundamental region for $C(3,4,5)$.}
\label{torus}
\end{figure}

\edit{Mazzola's harmonic strip \cite[pp. 321-322]{Mazzola} is
another interesting example.
The harmonic strip has vertices $\Z/7$. Specifically, it has seven 
0-simplices, fourteen 1-simplices, and seven 2-simplices. 
 By Theorem~\ref{euler}, this space
must either be $C(1,1,5)$, $C(1,3,3)$ or $C(2,2,3)$. 
Theorem~\ref{nnnplusk} implies this space will be a M\"obius band,
in agreement with Mazzola's findings.}

In \cite{Balzano}, Balzano discusses 12-fold and microtonal pitch systems. 
He describes several examples, including different spaces of triads with 20, 30, and
42 notes \cite[pp. 77-80]{Balzano}. These spaces are $C(4,5,11)$,
$C(5,6,19)$, and $C(6,7,29)$ respectively.  He also discusses the analogs
of the Riemannian {\em Tonnetz} generated by thirds and fifths from the diatonic scale
in the more general $C(n,n+1,n^2-n-1)$.
\cite[p. 75]{Balzano}. Theorem~\ref{tori} implies that this space will
always be a torus.

\edit{Cohn's work in \cite{Cohn} concerning the realizations of \emph{Parsimonious 
Tonnetze} also requires mention. In \cite[Sec. 2.3]{Cohn}, Cohn computes the 
requirements on $N,$ $n_1,$ and $n_2$ such that they lead to `optimally parsimonious
voice-leading' under PLR-family operations. 
He proves that the only spaces which support parsimonious voice-leading are
the $C(n,n+1,n+2)$. By Theorem~\ref{tori}, these \emph{Parsimonious Tonnetze} are
tori when $n > 1$ (when $n=1$, a more exotic space occurs).}

\edit{Cohn also describes the $LPR$ loops of the Riemannian \emph{Tonnetz}
in \cite{Cohn}. An $LPR$ loop consists of a triad and all other triads obtained by
applying the $R$, $P$, and $L$ transformations to it, in that order, until returning
to the original triad. This is of interest in connection with parsimonious voice-leading.
In \cite[Sec. 3.6]{Cohn}, Cohn discusses the placement of the six triads in an $LPR$ loop around a
given vertex, as shown in our Figure~\ref{hexagon}. The \edit{cyclical} ordering of the triads containing a particular
pitch class by means of an $LPR$ loop play a significant role in determining which generalized
\emph{Tonnetze} form surfaces.
This is not possible in every case: specifically when $C(n_1,n_2,n_3)$ is a cylinder,
M\"obius band, or circle of tetrahedra boundaries. 
In Section~\ref{surfwbdy} (notably Figures~\ref{band} and \ref{band2}),
we discuss the cases in which
a full LPR loop cannot be made around every vertex. In these cases, though,
the $P$, $L$, and $R$ operations do not behave as they do classically. For example,
if $n_1 = n_2$, then $P$ becomes the identity map.}


One space which does not occur as one of our generalized {\em Tonnetze} is
the Klein bottle.
Peck, on the other hand, discovers Klein bottle {\em Tonnetze} in his analysis
of musical spaces \cite{Peck}. Peck's Klein
bottle {\em Tonnetze} occur
in those cases in which we find circles of
tetrahedra boundaries.  Theorem~\ref{circleoftetra} implies that these are exactly the cases in which the
chords contain tritones (intervals of length $N/2$).
In these cases, our {\em Tonnetze} are quotients of his,
obtained by collapsing certain pairs of edges.  In Peck's Klein bottle {\em Tonnetze},
there are two edges corresponding to each tritone, while in ours,
these are considered to be a single edge.  This allows Peck's
{\em Tonnetze} to be  surfaces, while ours are singular along these edges,
since four distinct 2-simplices are joined at them.

Peck \cite[Fig. 20]{Peck} shows the pitch class {\em Tonnetz} consisting of the even pitch classes in
C(2,4,6).  We reproduce his figure here as Figure~\ref{peck1}.
Examining it, the relation between the rows is key.
Moving from one row to the next is done by applying an
operation on triads which fixes the tritone and moves the
third note.  In {\em Tonnetze} of chords which do not contain a tritone, 
there is a unique operation in the $PLR$ group which does this.
However, when the fixed interval is a tritone, there are
four triads which share it, so there are three other triads to which the triad could
move.  
In these {\em Tonnetze}, triads fall into two types.  The transpositions of $\{0,n_1,N/2\}$
are distinct from their inversions, which are transpositions of $\{0,n_2,N/2\}$.
(Recall that $n_1+n_2 = N/2$ in the {\em Tonnetze} under consideration.)

\begin{figure}
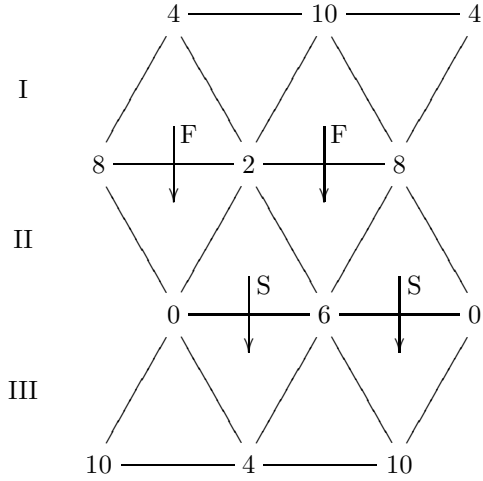

\begin{center}
\begin{minipage}{0.3\textwidth}
\vspace{0pt}
$
\xy
(-10,50)*{\mathrm{I}};
(-10,30)*{\mathrm{II}};
(-10,10)*{\mathrm{III}};
(0,0)*{10};(20,0)*{4};(40,0)*{10};(10,20)*{0};(30,20)*{6};
(50,20)*{0};(0,40)*{8};(20,40)*{2};(40,40)*{8};(10,60)*{4};
(30,60)*{10};(50,60)*{4};(12,44)*{\mathrm{F}};(32,44)*{\mathrm{F}};
(22,24)*{\mathrm{S}};(42,24)*{\mathrm{S}};
\ar@{-}(3,0);(18,0)
\ar@{-}(22,0);(37,0)
\ar@{-}(12,20);(28,20)
\ar@{-}(32,20);(48,20)
\ar@{-}(2,40);(18,40)
\ar@{-}(22,40);(38,40)
\ar@{-}(12,60);(27,60)
\ar@{-}(33,60);(48,60)
\ar@{-}(1,3);(9,17)
\ar@{-}(11,17);(19,3)
\ar@{-}(21,3);(29,17)
\ar@{-}(31,17);(39,3)
\ar@{-}(41,3);(49,17)
\ar@{-}(1,37);(9,23)
\ar@{-}(11,23);(19,37)
\ar@{-}(21,37);(29,23)
\ar@{-}(31,23);(39,37)
\ar@{-}(41,37);(49,23)
\ar@{-}(1,43);(9,57)
\ar@{-}(11,57);(19,43)
\ar@{-}(21,43);(29,57)
\ar@{-}(31,57);(39,43)
\ar@{-}(41,43);(49,57)
\ar@{->}(10,45);(10,35)
\ar@{->}(30,45);(30,35)
\ar@{->}(20,25);(20,15)
\ar@{->}(40,25);(40,15)
\endxy
$
\end{minipage}
\vspace{4ex}
\end{center}
\caption{Peck's Klein bottle {\em Tonnetz} \cite[Fig. 20]{Peck}
\label{peck1}}
\end{figure}

\begin{figure}
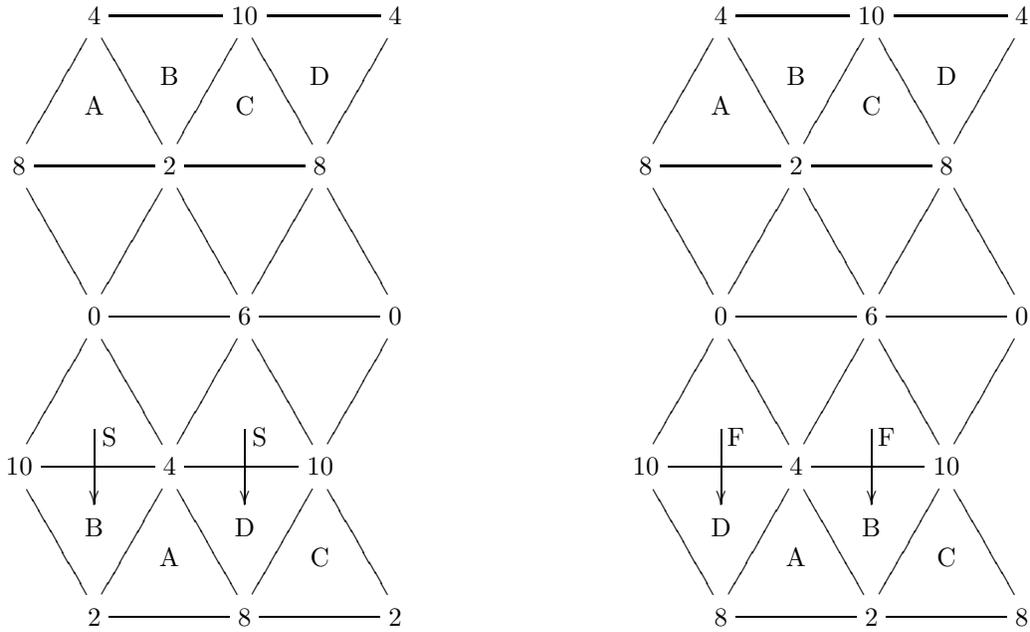

\begin{center}
\begin{minipage}{\textwidth}
\scalebox{1}{
\begin{minipage}{0.3\textwidth}
\vspace{0pt}
$
\xy
(0,0)*{10};(20,0)*{4};(40,0)*{10};(10,20)*{0};(30,20)*{6};
(50,20)*{0};(0,40)*{8};(20,40)*{2};(40,40)*{8};(10,60)*{4};
(30,60)*{10};(50,60)*{4};(10,-20)*{2};(30,-20)*{8};(50,-20)*{2};
(10,48)*{\mathrm{A}};(20,52)*{\mathrm{B}};(30,48)*{\mathrm{C}};
(40,52)*{\mathrm{D}};(10,-8)*{\mathrm{B}};(20,-12)*{\mathrm{A}};
(30,-8)*{\mathrm{D}};(40,-12)*{\mathrm{C}};(12,4)*{\mathrm{S}};
(32,4)*{\mathrm{S}};
\ar@{-}(3,0);(18,0)
\ar@{-}(22,0);(37,0)
\ar@{-}(12,20);(28,20)
\ar@{-}(32,20);(48,20)
\ar@{-}(2,40);(18,40)
\ar@{-}(22,40);(38,40)
\ar@{-}(12,60);(27,60)
\ar@{-}(33,60);(48,60)
\ar@{-}(32,-20);(48,-20)
\ar@{-}(12,-20);(28,-20)
\ar@{-}(1,3);(9,17)
\ar@{-}(11,17);(19,3)
\ar@{-}(21,3);(29,17)
\ar@{-}(31,17);(39,3)
\ar@{-}(41,3);(49,17)
\ar@{-}(1,37);(9,23)
\ar@{-}(11,23);(19,37)
\ar@{-}(21,37);(29,23)
\ar@{-}(31,23);(39,37)
\ar@{-}(41,37);(49,23)
\ar@{-}(1,43);(9,57)
\ar@{-}(11,57);(19,43)
\ar@{-}(21,43);(29,57)
\ar@{-}(31,57);(39,43)
\ar@{-}(41,43);(49,57)
\ar@{-}(1,-3);(9,-17)
\ar@{-}(11,-17);(19,-3)
\ar@{-}(21,-3);(29,-17)
\ar@{-}(31,-17);(39,-3)
\ar@{-}(41,-3);(49,-17)
\ar@{->}(10,5);(10,-5)
\ar@{->}(30,5);(30,-5)
\endxy
$
\end{minipage}
}
\hspace{20ex}
\scalebox{1}{
\begin{minipage}{0.3\textwidth}
\vspace{0pt}
$
\xy
(0,0)*{10};(20,0)*{4};(40,0)*{10};(10,20)*{0};(30,20)*{6};
(50,20)*{0};(0,40)*{8};(20,40)*{2};(40,40)*{8};(10,60)*{4};
(30,60)*{10};(50,60)*{4};(10,-20)*{8};(30,-20)*{2};(50,-20)*{8};
(10,48)*{\mathrm{A}};(20,52)*{\mathrm{B}};(30,48)*{\mathrm{C}};
(40,52)*{\mathrm{D}};(10,-8)*{\mathrm{D}};(20,-12)*{\mathrm{A}};
(30,-8)*{\mathrm{B}};(40,-12)*{\mathrm{C}};(12,4)*{\mathrm{F}};
(32,4)*{\mathrm{F}};
\ar@{-}(3,0);(18,0)
\ar@{-}(22,0);(37,0)
\ar@{-}(12,20);(28,20)
\ar@{-}(32,20);(48,20)
\ar@{-}(2,40);(18,40)
\ar@{-}(22,40);(38,40)
\ar@{-}(12,60);(27,60)
\ar@{-}(33,60);(48,60)
\ar@{-}(32,-20);(48,-20)
\ar@{-}(12,-20);(28,-20)
\ar@{-}(1,3);(9,17)
\ar@{-}(11,17);(19,3)
\ar@{-}(21,3);(29,17)
\ar@{-}(31,17);(39,3)
\ar@{-}(41,3);(49,17)
\ar@{-}(1,37);(9,23)
\ar@{-}(11,23);(19,37)
\ar@{-}(21,37);(29,23)
\ar@{-}(31,23);(39,37)
\ar@{-}(41,37);(49,23)
\ar@{-}(1,43);(9,57)
\ar@{-}(11,57);(19,43)
\ar@{-}(21,43);(29,57)
\ar@{-}(31,57);(39,43)
\ar@{-}(41,43);(49,57)
\ar@{-}(1,-3);(9,-17)
\ar@{-}(11,-17);(19,-3)
\ar@{-}(21,-3);(29,-17)
\ar@{-}(31,-17);(39,-3)
\ar@{-}(41,-3);(49,-17)
\ar@{->}(10,5);(10,-5)
\ar@{->}(30,5);(30,-5)
\endxy
$
 \end{minipage}
} 
\end{minipage}
\vspace{4ex}
\caption{Peck's Klein bottle {\em Tonnetz} and a torus variant
\label{peck2}}
\end{center}
\end{figure}

Of the four operations on triads which fix the tritone, two will exchange these two types,
just as the $P$, $L$, and $R$ operations do.  Let us call these $S$ and $F$, where $S$
moves the third note to a note in the {\em same} half of the musical circle, and $F$ 
{\em flips}
it to the appropriate note in the other half of the musical circle.  For example,
$S(\{0,n_1,N/2\}) = \{0,n_2,N/2\}$, while 
$F(\{0,n_1,N/2\}) = \{0,-n_1,N/2\} = \{0,n_2+n_3,N/2\}$.
The composite $SF = T_{N/2}$.

Examining Peck's Figure 20 (Figure~\ref{peck1}), we see that the transition from row I to row II is
done by applying $F$, since $F(\{2,4,8\}) = \{2,0,8\}$ and
$F(\{2,10,8\}) = \{2,6,8\}$. The transition from row II
to row III is by $S$:  $S(\{0,2,6\}) = \{0,4,6\}$ and 
$S(\{0,8,6\}) = \{0,10,6\}$.  To make the transition from row III back to row I we
must decide whether to apply $S$ or $F$.  If we apply $S$, we get Peck's
Klein bottle {\em Tonnetz}, as shown in Figure~\ref{peck2}.
We have labelled the 2-simplices in the top and bottom rows of this diagram 
$A$, $B$, $C$ and $D$, to
make evident the reversal which gives us a Klein bottle.

Had we chosen to join row III to row I by the operation $F$ instead, we would obtain the 
other
{\em Tonnetz} shown in Figure~\ref{peck2}.  Here, we evidently get
a torus.  Choosing either $S$ or $F$ to join each row to the next gives us eight choices
in assembling a {\em Tonnetz} from the 2-simplices of the even component of
$C(2,4,6)$. We could call them
$SSS$, $SSF$, $SFS$, $\ldots$, $FFF$, with Peck's Klein bottle {\em Tonnetz} corresponding to
$FSS$ in the ordering of simplices used in Peck's Figure 20 and our Figures~\ref{peck1} and 
\ref{peck2}.
Half of these are tori and half are Klein bottles, depending upon the number of flips $F$
used to assemble them.  If this number is even, we get a torus.  If it is odd, we get a 
Klein bottle.

In our {\em Tonnetz} $C(2,4,6)$, each row is formed into a tetrahedron, as in Figure~\ref{tetra1},
since each tritone corresponds to a single edge.  These then join one another in a circular
fashion, as shown in Figure~\ref{circleoft}.  Each of the analogs of Peck's Klein bottle {\em Tonnetz}, distinguished by the
sequences $SSS$ through $FFF$, map to $C(2,4,6)$ by collapsing the two copies of each tritone
to one.  

As an interesting side note here, the analog of the $PLR$ group will be the same for all
these {\em Tonnetze}, since their sets of 2-simplices are the same.  The differences
between them arise from the manner in which these 2-simplices are joined. The analog of the $PLR$ group 
cannot act simply transitively on the 2-simplices, since there are two components and $P$, $L$, 
and $R$ preserve the components. To get a simply transitive group action, one must use the generalized contextual group of Fiore and Satyendra \cite{Fiore2}.

\edit{Another interesting fact, discussed in the introduction and proven in Theorem~\ref{disj}, 
is that if the edge lengths share a common divisor, then the space will consist
of independent, identical components (Corollary~\ref{connected}). Peck's Fig. 20 and 21 illustrate this fact. 
In these examples,
Peck considers $C(2,4,6)$, which he shows  
consists of two copies of $C(1,2,3)$, one containing the even pitch classes (\cite[Fig. 20]{Peck})
and one containing the odd (\cite[Fig. 21]{Peck}).}

\section{The Euler Characteristic}
\label{eulchar}
\edit{The first step in our classification of spaces of triads is a
computation of the Euler characteristic. The Euler characteristic is a
geometric invariant of spaces which distinguishes the compact,
orientable surfaces without boundary. That is, two compact orientable
surfaces without boundary which have the same Euler characteristic are
topologically equivalent. For example, every torus, e.g. the
neo-Riemannian \emph{Tonnetz}, has Euler characteristic zero.
When $C(n_1,n_2,n_3)$ is non-orientable (like the M\"obius band), or has
a boundary, or is not a surface, more work will be necessary to complete
our classification of spaces of triads.}

The Euler characteristic of a finite simplicial complex is determined by
counting the $i$-simplices for each $i$.
While the number of vertices \edit{in $C(n_1,n_2,n_3)$ is simply $N=n_1
+n_2 + n_3$, computing the number of edges and faces requires more careful
counting}. We recall the definition of \edit{the} Euler characteristic
and then proceed with finding the number of edges and faces in our
simplicial complex $C(n_1,n_2,n_3)$.

\begin{definition}
The Euler characteristic, $\chi$, of a simplicial complex is:
\[
  \chi = \sum_{i \geq 0}{(-1)^in_i}
\]
where $n_i$ is the number of $i$-simplices. For a 2-dimensional complex, this reduces to
\[
  \chi = V - E + F
\]
where $V$ is the number of vertices, $E$ is the number of edges, and $F$ is the number of faces.
\end{definition}

We make the following definitions to simplify the counting of the number of edges.

\begin{definition}
Suppose $1 \leq n_1 \leq n_2 \leq n_3 < N$ are integers and $N= n_1 + n_2 +n_3$.
\begin{center}
 $X = \{\{k,k+n_1\}| \, k \in \mathbb{Z}/N\}$ \\
  $Y = \{\{k,k+n_2\}| \, k \in \mathbb{Z}/N\}$ \\
  $Z = \{\{k,k+n_3\}| \, k \in \mathbb{Z}/N\}$
  \end{center}
\end{definition}

Clearly, $X$ contains all edges of $C(n_1,n_2,n_3)$ which have length $n_1$, $Y$ contains those of length $n_2$, and \edit{$Z$ contains those of length $n_3$}. Thus, the size of the set $X$ is the number of edges of length $n_1$, and \edit{similarly} for $Y$ and $Z$. \\

\edit{We start with  some  useful lemmas.}

\begin{lemma}
 \edit{ Suppose $1 \le n_1 \le n_2 \le n_3 < N$ are integers and $N= n_1 + n_2 + n_3$. Then $n_1 < N/2$ and $n_2 < N/2$.}
\label{lemma1}
\end{lemma}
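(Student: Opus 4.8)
The plan is to prove both inequalities simultaneously, since by the ordering $n_1 \le n_2 \le n_3$ it suffices to bound $n_2$ from above and then note $n_1 \le n_2$ gives the rest. The key observation is that the three positive integers sum to $N$, and $n_3$ is the largest of them. I would argue by contradiction: suppose $n_2 \ge N/2$. Then since $n_3 \ge n_2$, we would have $n_2 + n_3 \ge N/2 + N/2 = N$, and adding the strictly positive integer $n_1 \ge 1$ yields $n_1 + n_2 + n_3 > N$, contradicting $n_1 + n_2 + n_3 = N$. Hence $n_2 < N/2$, and since $n_1 \le n_2$ we immediately get $n_1 \le n_2 < N/2$ as well.

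To make the contradiction clean, I would track the strictness carefully. The hypothesis $n_1 \ge 1 > 0$ is what forces the inequality to be strict: from $n_2 \ge N/2$ and $n_3 \ge n_2 \ge N/2$ we get $n_2 + n_3 \ge N$, so $N = n_1 + (n_2 + n_3) \ge n_1 + N > N$, where the last step uses $n_1 \ge 1$. This gives $N > N$, the desired contradiction. An equivalent direct argument avoids contradiction entirely: from $N = n_1 + n_2 + n_3$ and $n_3 \ge n_2$ we have $N \ge n_1 + 2n_2 > 2n_2$ (again using $n_1 \ge 1$), so $n_2 < N/2$; the same computation with $n_3 \ge n_2 \ge n_1$ gives $N \ge 3n_1 > 2n_1$, hence $n_1 < N/2$. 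I would likely present this direct version, as it is the shortest and displays the role of each hypothesis transparently.

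There is no real obstacle here; this is an elementary consequence of the defining constraints on $(n_1, n_2, n_3)$. The only subtlety worth flagging is ensuring the inequality comes out strict rather than merely $\le N/2$, which is precisely where the assumption $n_1 \ge 1$ (that all parts are genuine positive integers, so the smallest contributes at least $1$) is used. Everything else is direct substitution into $N = n_1 + n_2 + n_3$ together with the chain $n_1 \le n_2 \le n_3$.
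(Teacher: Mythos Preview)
Your proposal is correct, and your direct version is essentially the paper's own proof: the paper writes the single chain $2n_1 \leq 2n_2 \leq n_2 + n_3 < n_1 + n_2 + n_3 = N$, which is exactly your observation $N \geq n_1 + 2n_2 > 2n_2$ read backwards. You have also correctly identified that the strictness comes from $n_1 \geq 1$.
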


\begin{proof}
\edit{$2n_1 \leq 2n_2 \leq n_2 + n_3 < n_1 + n_2 + n_3 = N$.}
\end{proof}

\begin{lemma}
\edit{ Suppose $1 \le n_1 \le n_2 \le n_3 < N$ are integers and $N= n_1 + n_2 + n_3$. Then $n_3 = n_1 + n_2$ if and only if $n_3 = N/2$.}
\label{lemma2}
\end{lemma}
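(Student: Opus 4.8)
The plan is to prove both implications directly from the defining relation $N = n_1 + n_2 + n_3$, using it as a substitution in each direction. Since the statement is an equivalence relating the condition $n_3 = n_1 + n_2$ to the condition $n_3 = N/2$, and both of these can be rephrased purely in terms of the total $N$, the proof should reduce to elementary algebra with no real obstacle.

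For the forward direction, I would assume $n_3 = n_1 + n_2$ and substitute into $N = n_1 + n_2 + n_3$ to replace $n_1 + n_2$ by $n_3$, obtaining $N = n_3 + n_3 = 2 n_3$, and hence $n_3 = N/2$. For the converse, I would instead assume $n_3 = N/2$ and rearrange the same relation: $n_1 + n_2 = N - n_3 = N - N/2 = N/2 = n_3$. Thus each direction is a single rearrangement of the hypothesis $N = n_1 + n_2 + n_3$.

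The only point worth flagging is that both directions depend entirely on the relation $N = n_1 + n_2 + n_3$; the ordering inequalities $1 \le n_1 \le n_2 \le n_3 < N$ are not needed for this particular equivalence, though they are of course in force from the standing assumptions. I do not anticipate any genuine difficulty here, as the argument amounts to one substitution in each direction, and the lemma is really a bookkeeping observation that will be used later to identify the tritone case $n_3 = N/2$.
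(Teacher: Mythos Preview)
Your proposal is correct and matches the paper's own proof, which is the one-line observation that $n_1+n_2+n_3 = N = 2n_3$ if and only if $n_1+n_2 = n_3$. Your remark that the ordering hypotheses are not actually used here is also accurate.
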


\begin{proof}
\edit{By the definition of $N$, it follows that $n_1+n_2+n_3 = N = 2n_3$ if and only if $n_1 + n_2 = n_3$.}
\end{proof}
  
\begin{lemma} Suppose $1 \leq n_1 \leq n_2 \leq n_3 < N$ are integers and $N = n_1 + n_2 + n_3$. \edit{The sets $X$ and $Y$ each have $N$ elements. The number of elements in $Z$ is 
\[
  |Z| = \left\{
      \begin{array}{ll}
         N & \quad{\textstyle if}\,\,\,n_3 \neq n_1 + n_2\\
         N/2 & \quad{\textstyle if}\,\,\,n_3 = n_1 + n_2
     \end{array}
   \right.
\]
}
\end{lemma}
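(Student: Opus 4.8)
The plan is to handle all three sets by a single counting principle and then read off the individual cases using Lemmas~\ref{lemma1} and~\ref{lemma2}. For a fixed length $d$ with $1 \le d < N$, consider the translation map $\phi_d : \mathbb{Z}/N \to \cE$ given by $\phi_d(k) = \{k, k+d\}$. By the definition of the sets, $X$, $Y$, and $Z$ are precisely the images of $\phi_{n_1}$, $\phi_{n_2}$, and $\phi_{n_3}$, so $|X|$, $|Y|$, $|Z|$ are just the cardinalities of these images. Since $d \ge 1$, each $\{k, k+d\}$ is a genuine two-element edge, and the size of the image is $N$ divided by the common fiber size of $\phi_d$, once that fiber size is pinned down.

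The key step is to determine exactly when two translates coincide. First I would show that $\{k, k+d\} = \{k', k'+d\}$ forces either $k \equiv k'$, or else $k \equiv k'+d$ together with $k+d \equiv k' \pmod{N}$; in the second case, subtracting the two congruences yields $2d \equiv 0 \pmod{N}$. Conversely, if $2d \equiv 0$, then $\phi_d(k) = \phi_d(k+d)$ for every $k$, so $\phi_d$ is exactly two-to-one; otherwise it is injective. Because $1 \le d < N$ gives $0 < 2d < 2N$, the only multiple of $N$ available to $2d$ is $N$ itself, so $2d \equiv 0 \pmod{N}$ holds precisely when $d = N/2$ (which in particular requires $N$ even). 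Hence the image of $\phi_d$ has $N$ elements when $d \neq N/2$ and $N/2$ elements when $d = N/2$.

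It then remains to apply this to each of $n_1$, $n_2$, $n_3$. By Lemma~\ref{lemma1} we have $n_1 < N/2$ and $n_2 < N/2$, so neither length equals $N/2$ and therefore $|X| = |Y| = N$. For $Z$, Lemma~\ref{lemma2} says that $n_3 = N/2$ if and only if $n_3 = n_1 + n_2$; feeding this into the counting principle gives $|Z| = N/2$ when $n_3 = n_1 + n_2$ and $|Z| = N$ otherwise, which is exactly the claimed formula.

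The one point I would be most careful about is the coincidence analysis in the second paragraph: verifying that the ``swapped'' case $k \equiv k'+d$, $k+d \equiv k'$ collapses to the single condition $2d \equiv 0 \pmod{N}$, and that when this condition holds the fibers of $\phi_d$ have size exactly two rather than larger. The latter uses $d \neq 0$, so that $k$ and $k+d$ are distinct elements that are genuinely identified in pairs. Everything beyond this is a direct application of the two preliminary lemmas.
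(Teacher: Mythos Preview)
Your proposal is correct and follows essentially the same approach as the paper: analyze when two unordered pairs $\{k,k+d\}$ and $\{k',k'+d\}$ coincide, reduce the nontrivial case to $2d \equiv 0 \pmod N$, and then invoke Lemmas~\ref{lemma1} and~\ref{lemma2} to read off the answers for $n_1,n_2,n_3$. The only cosmetic difference is that you package the argument once for a general length $d$ via the map $\phi_d$ and then specialize, whereas the paper carries out the $n_1$ case explicitly, declares the $n_2$ and $n_3\neq N/2$ cases ``similar,'' and handles $|Z|=N/2$ by directly exhibiting the repetition in the second half of the list; the underlying collision analysis is the same.
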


\begin{proof}
\edit{Consider the list 
\[
  \{0,n_1\}, \{1,1+n_1\},\ldots,\{(N-1),(N-1)+n_1\}.
\]
Suppose $\{i,i+n_1\} = \{j,j+n_1\}$. Then $i = j$ or $i = j + n_1$. If $i = j$, then $i + n_1 = j + n_1$ and $\{i,i+n_1\}$ and $\{j,j+n_1\}$ are the same entry in the list. If, on the other hand, $i = j+n_1$, then $j = i + n_1$ and $i = i+n_1 + n_1$, so $2n_1 = 0$ in $\mathbb{Z}/N$. But this contradicts $n_1 < N/2$ in  Lemma~\ref{lemma1}. Thus $i = j$ is the only possibility, and the list has $N$ elements, so $|X|=N$. The proof of $|Y|=N$ is similar, since $n_2 <N/2$ by Lemma~\ref{lemma1}.}

\edit{The proof of $|Z| = N$ in the case $n_3 \neq n_1 + n_2$ is also similar by Lemma~\ref{lemma2}.}

\edit{To show $|Z|= N/2$ in the case $n_3 = n_1 + n_2$, note that $N=2n_3$ by Lemma~\ref{lemma2}. Then the sets $\{k,k+n_3\}$ are all different for 
$0 \leq k < n_3$, since each set $\{k,k+n_3\}$ contains only one of $0 \leq k < n_3$. Then for $n_3 \leq k < N$, we have 
\[
  \{ k,k+n_3\} = \{ n_3 +i,n_3 + i +n_3\} = \{i,i+n_3\}
\]
for some $0 \leq i < n_3$, so we are back to the first half of the list.}
\end{proof}

\begin{theorem}
The \edit{number} of edges $|\cE|$ of \edit{the} simplicial complex $C(n_1,n_2,n_3)$ is given by the following chart:
\begin{center}
\renewcommand{\arraystretch}{1.2}
\begin{tabular}{|c|c|c|c|c|}
\hline
Cases & $|X|$& $|Y|$& $|Z|$& $|X \cup Y \cup Z|$ \\
\hline
 $n_1 < n_2 < $\edit{$n_3$} and $n_3 \ne n_1 + n_2$& $N$& $N$& $N$& $3N$ \\
\hline
 $n_1 < n_2 < n_3$ and $n_3 = n_1 + n_2$& $N$& $N$& $N/2$& $5N/2$ \\
\hline
 $n_1 = n_2 < n_3$ and $n_3 \ne n_1 + n_2$& $N$& $N$& $N$& $2N$ \\
\hline
 $n_1 = n_2 < n_3$ and $n_3 = n_1 + n_2$& $N$& $N$& $N/2$& $3N/2$ \\
\hline
 $n_1 < n_2 = n_3$& $N$& $N$& $N$& $2N$ \\
\hline
 $n_1 = n_2 = n_3$& $N$& $N$& $N$& $N$ \\
\hline
\end{tabular}
\end{center}
\label{edges2}
\end{theorem}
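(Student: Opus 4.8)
The plan is to leverage the preceding lemma, which already supplies $|X| = |Y| = N$ together with the two-case value of $|Z|$, and to reduce the computation of $|X \cup Y \cup Z|$ to understanding how the three edge-sets overlap. The central observation is that any two of $X$, $Y$, $Z$ are either \emph{identical} or \emph{disjoint}, and which of these occurs is governed entirely by whether the corresponding step lengths agree.

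First I would analyze when a length-$n_i$ edge can equal a length-$n_j$ edge. Writing $\{k, k+n_i\} = \{\ell, \ell+n_j\}$ as unordered pairs in $\Z/N$ yields exactly two possibilities: either $n_i = n_j$ (with $k = \ell$), or $n_i + n_j \equiv 0 \pmod{N}$ (the reversed matching, with $k = \ell + n_j$). The second possibility can be excluded uniformly: since $n_1 + n_2 + n_3 = N$ with every $n_t \geq 1$, each pairwise sum satisfies $n_i + n_j = N - n_k$, where $0 < N - n_k < N$ because $1 \le n_k < N$; hence $n_i + n_j \not\equiv 0 \pmod{N}$. Therefore two of the sets $X$, $Y$, $Z$ meet if and only if their step lengths are equal, in which case the defining lists coincide term by term and the two sets are literally the same. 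The internal self-identification in $Z$ when $n_3 = n_1 + n_2$ does not disturb this, since it is already absorbed into the value $|Z| = N/2$ from the lemma.

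With the equal-or-disjoint dichotomy established, I would organize the count by the equality pattern among $n_1$, $n_2$, $n_3$, which is precisely how the six rows of the chart are delineated. When all three lengths are distinct (rows 1 and 2) the sets are pairwise disjoint, so $|X \cup Y \cup Z| = |X| + |Y| + |Z|$, giving $3N$ or $5N/2$ according to the lemma's two cases for $|Z|$. When $n_1 = n_2 < n_3$ (rows 3 and 4) we have $X = Y$ while $Z$ is disjoint from them, so the total is $N + |Z|$, equal to $2N$ or $3N/2$. When $n_1 < n_2 = n_3$ (row 5) we have $Y = Z$ disjoint from $X$; here I would note that $n_2 = n_3$ forces $n_3 \neq n_1 + n_2$ (otherwise $n_1 = 0$), so $|Z| = N$ and the total is $2N$. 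Finally $n_1 = n_2 = n_3$ (row 6) gives $X = Y = Z$, with the degenerate equality $n_3 = n_1 + n_2$ again impossible, so the total is simply $N$.

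I expect the only genuine subtlety to be this pairwise dichotomy; once the reversed-matching congruence is ruled out by the identity $n_i + n_j = N - n_k$, every row reduces to a routine substitution of the lemma's values for $|X|$, $|Y|$, $|Z|$. A minor bookkeeping point worth flagging in the writeup is that the hypothesis $n_3 = n_1 + n_2$, which distinguishes the value of $|Z|$, is logically compatible with the equalities among the $n_i$ only in the fully distinct and the $n_1 = n_2$ cases; in the $n_2 = n_3$ and all-equal cases it is automatically false, which is exactly why rows 5 and 6 carry no $n_3 = n_1 + n_2$ subcase.
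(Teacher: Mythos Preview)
Your proposal is correct and follows essentially the same approach as the paper: invoke the preceding lemma for $|X|$, $|Y|$, $|Z|$ and then account for overlaps among the three sets row by row. The paper's proof is terser than yours---it simply asserts $X=Y$ when $n_1=n_2$, $Y=Z$ when $n_2=n_3$, and implicitly treats the sets as disjoint otherwise---whereas you actually justify the equal-or-disjoint dichotomy via the identity $n_i+n_j=N-n_k$ to rule out the reversed matching. Your added rigor fills a small gap the paper leaves to the reader; otherwise the arguments are the same.
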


\begin{proof}
In the first and second rows of the table, we see that the number of edges is $|X| + |Y| + |Z|$. In the third and fourth rows, the number of edges is
actually $|X| + |Z|$, since $X = Y$, and we avoid double counting. In the fifth row, we see that $Y = Z$, and hence the number of edges is only $|X| + |Y|$. In the last row, since all three sets are equal, we only count one of them to avoid triple counting.
\end{proof}

\edit{Now that we have counted the edges of $C(n_1,n_2,n_3)$, we count the faces.}
\edit{Let us call} the vertex between the edges of lengths $n_3$ and $n_1$
the {\em basepoint} of the \edit{2-simplex. 
Recall that $\cF$ is the set of all faces of $C(n_1,n_2,n_3)$.}

\begin{lemma}
\edit{Elements of
$\cF$ are of the form 
\[
\{\{k,k+n_1,k+n_1+n_2\} \,\, | \,\, k \in \mathbb{Z}/N\}
\]
or 
\[
\{\{k,k-n_1,k-n_1-n_2\} \,\, | \,\, k \in \mathbb{Z}/N \}.
\]
}
\label{defin}
\end{lemma}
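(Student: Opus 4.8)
The plan is to establish the claimed description of $\cF$ directly from the definition of the 2-simplices of $C(n_1,n_2,n_3)$. Recall that the set of 2-simplices is defined to be the $T/I$-class of the fixed triad $\{0,n_1,n_1+n_2\}$, that is, all mod $N$ transpositions and inversions of this chord. A transposition is $T_k(\{0,n_1,n_1+n_2\}) = \{k,k+n_1,k+n_1+n_2\}$ for $k \in \mathbb{Z}/N$, which yields precisely the first family in the statement. So the real content is to show that the inversions contribute exactly the second family and nothing more.

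\medskip

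First I would fix a standard inversion, say $I(x) = -x$, so that the full $T/I$-class is generated by the transpositions $T_k$ together with $I$; every inversion can be written as $T_k I = I T_{-k}$. Applying $I$ to the base triad gives $I(\{0,n_1,n_1+n_2\}) = \{0,-n_1,-n_1-n_2\}$, and then applying an arbitrary transposition $T_k$ gives $\{k,k-n_1,k-n_1-n_2\}$. As $k$ ranges over $\mathbb{Z}/N$ this is exactly the second displayed family. Thus every element of the $T/I$-class lies in one of the two listed families, and conversely every set in either family is manifestly a transposition or an inversion-transposition of the base triad, hence an element of $\cF$. This matches the description of the basepoint: in the first family the vertex $k$ sits between an edge of length $n_3$ (from $k-n_3 = k+n_1+n_2$ back to $k$, reading cyclically) and an edge of length $n_1$ (from $k$ to $k+n_1$), and the second family is the mirror image.

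\medskip

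The main subtlety — and the step I would be most careful about — is that these two families need not be disjoint, and one should not claim they are; the lemma only asserts that $\cF$ \emph{consists of} sets of these two forms, i.e. that $\cF$ equals the union of the two families, which is exactly what the generation-by-$T/I$ argument gives without any disjointness hypothesis. In particular, when some of the $n_i$ coincide (for instance $n_1 = n_2$, or in the symmetric cases appearing later in Figure~\ref{possibilities}), a transposition of the base triad may coincide with an inversion of it, so the two families overlap; this is harmless for the set-equality being claimed. I would therefore phrase the verification purely as a double inclusion between $\cF$ and the union of the two families, and explicitly note that the choice of the particular inversion $I(x)=-x$ is immaterial since any two inversions differ by a transposition, so they generate the same $T/I$-class and hence the same second family. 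No genuine obstacle arises here; the argument is a direct unwinding of the definition, and the only thing to guard against is overstating the conclusion by asserting a disjointness that is false in the degenerate cases.
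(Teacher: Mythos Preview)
Your proposal is correct and follows essentially the same approach as the paper: both arguments unwind the definition of the $T/I$-class of $\{0,n_1,n_1+n_2\}$, identifying the transpositions with the first family and the inversions with the second, while noting that the two families may overlap in degenerate cases. The only cosmetic difference is that the paper applies the inversion $I_k(i)=k-i$ directly, whereas you factor it as $T_k\circ I$ with $I(x)=-x$; these are the same map, so the arguments coincide.
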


\begin{proof}
\edit{By Definition~\ref{defin}, the 2-simplices are the translations and inversions of
$\{0,n_1,n_1+n_2\}$.  Let us call these {\em Type I} and {\em Type II},
respectively, with the understanding that, in exceptional circumstances, these types might overlap.
Clearly, the translations are those of the form
$\{k,k+n_1,k+n_1+n_2\}$. Just as clearly,  the inversion $I_k(i) = k-i$ 
produces those of the form $\{k,k-n_1,k-n_1-n_2\}$. }
\end{proof}

\edit{In the case of $C(3,4,5)$, the Type I simplices are the minor triads and
the Type II simplices are the major triads}. However,  in more symmetrical
situations, these types may not be distinct.

\begin{proposition}
If $n_1=n_2=n_3$, then \edit{$|\cF|$} $= N/3$.
\label{faces1}
\end{proposition}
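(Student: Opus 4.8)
The plan is to recognize the base triad $\{0,n_1,n_1+n_2\}$ as a \emph{subgroup} of $\mathbb{Z}/N$ and then simply count its cosets. Writing $n = n_1 = n_2 = n_3$, we have $N = 3n$, and the base 2-simplex is $\{0,n,2n\}$, which is exactly the cyclic subgroup $H = \langle n \rangle = \{0,n,2n\}$ of order $3$ in $\mathbb{Z}/N$. By Lemma~\ref{defin}, every element of $\cF$ is either a translation $\{k,k+n,k+2n\}$ (Type I) or an inversion $\{k,k-n,k-2n\}$ (Type II), so the whole problem reduces to counting these two families without double-counting.

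First I would dispose of the Type I faces. The set $\{k,k+n,k+2n\}$ is precisely the coset $k+H$. Since $H$ has order $3$, its cosets partition $\mathbb{Z}/N$ into $N/3 = n$ blocks, so the translations contribute exactly $N/3$ distinct faces (each coset being listed three times as $k$ ranges over $\mathbb{Z}/N$).

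Next I would show the Type II faces produce nothing new. The key observation — and essentially the only point requiring verification — is that $H = -H$ in $\mathbb{Z}/3n$, because $-n \equiv 2n$ and $-2n \equiv n \pmod{3n}$. Consequently $\{k,k-n,k-2n\} = \{k,k+2n,k+n\} = k+H$, so each inversion is again a coset of $H$. This realizes exactly the ``exceptional'' overlap of Types I and II anticipated in the remark following Lemma~\ref{defin}: in the totally symmetric case the two types coincide.

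Combining the two steps, $\cF$ is exactly the set of cosets of $H$, of which there are $N/|H| = 3n/3 = N/3$, giving the claim. The main (and really the only) obstacle is the verification that inversion fixes the subgroup $H$ setwise; once that symmetry is noted, the count follows immediately from the coset partition of $\mathbb{Z}/N$ by $H$.
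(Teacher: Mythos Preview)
Your proof is correct and takes essentially the same approach as the paper: both recognize the 2-simplices as the cosets $k+\langle n\rangle$ in $\mathbb{Z}/N$ and count them via Lagrange. You are merely more explicit than the paper in verifying that the Type~II simplices coincide with the Type~I simplices (via $H=-H$), a step the paper compresses into the single phrase ``all 2-simplices have the form $\{k,k+n_1,k+2n_1\}$.''
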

\begin{proof}
Clearly, $N=3n_1$, and all \edit{2-simplices} have the form $\{k,k+n_1,k+2n_1\}$. This is simply the coset $k + \langle n_1 \rangle$ in $\Z/N$, so there are $N/3$ of them.
\end{proof}

\begin{proposition}
If $n_1<n_2<n_3$, then \edit{$|\cF|$} $= 2N$.
\label{faces2}
\end{proposition}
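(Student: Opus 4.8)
The plan is to show that the $N$ Type I faces are pairwise distinct, that the $N$ Type II faces are pairwise distinct, and that no Type I face coincides with a Type II face; this yields exactly $N+N = 2N$ elements of $\cF$. By Lemma~\ref{defin} these translations and inversions are all the faces, so this count is what we want.

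The key invariant is the \emph{cyclic gap sequence} of a three-element subset $\{a,b,c\} \subseteq \mathbb{Z}/N$: listing the three clockwise distances between cyclically consecutive elements produces a cyclic sequence of positive integers summing to $N$. This sequence depends only on the set and is unchanged by translation. First I would compute it for each type. For a Type I face $\{k, k+n_1, k+n_1+n_2\}$ the clockwise gaps read $(n_1, n_2, n_3)$, using $n_1+n_2+n_3 = N$ to close up the circle. For a Type II face, rewriting $k-n_1-n_2 = k+n_3$ (again via $n_1+n_2+n_3 = N$) shows the three vertices are $k+n_3,\ k-n_1,\ k$ in clockwise order, so its gaps read $(n_1, n_3, n_2)$.

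Next I would invoke the hypothesis $n_1 < n_2 < n_3$, which forces the three gaps to be distinct. For a Type I face this means that, in the cyclic sequence $(n_1,n_2,n_3)$, an edge of length $n_3$ immediately followed (clockwise) by an edge of length $n_1$ occurs exactly once, so the basepoint $k$---the vertex sitting between these two edges---is recovered from the set alone. Hence distinct values of $k$ give distinct Type I faces, and there are exactly $N$ of them. Applying the identical argument to the sequence $(n_1, n_3, n_2)$ gives $N$ distinct Type II faces.

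Finally, the disjointness of the two types is where the strict inequalities do the real work. A Type I face and a Type II face can be equal only if $(n_1,n_2,n_3)$ and $(n_1,n_3,n_2)$ agree as cyclic sequences; checking the three cyclic rotations shows this forces one of $n_1 = n_2$, $n_2 = n_3$, or $n_1 = n_3$, each excluded by $n_1 < n_2 < n_3$. I expect this last step to be the main obstacle, since it is precisely the point at which the argument must break down in the degenerate cases---and it must, because the other rows of Figure~\ref{possibilities} have fewer than $2N$ faces. Everything else is bookkeeping on $\mathbb{Z}/N$ resting on the identity $n_1+n_2+n_3 = N$.
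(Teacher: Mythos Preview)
Your proposal is correct and follows essentially the same approach as the paper: both count $N$ Type~I and $N$ Type~II faces by recovering the basepoint $k$ (the vertex between the $n_3$- and $n_1$-edges), then show the two types are disjoint. The only difference is cosmetic: for disjointness the paper computes directly that a face of both types would force $n_1 + 2n_2 \equiv 0 \pmod{N}$ and hence $n_2 = n_3$, whereas you package the same obstruction as the non-equivalence of the cyclic gap sequences $(n_1,n_2,n_3)$ and $(n_1,n_3,n_2)$.
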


\begin{proof}
Let us start by calculating the \edit{number of} Type I \edit{2-simplices}. It is obvious
that the vertex between the edges of length $n_3$ and $n_1$ selects a
unique, well-defined value of $k$ which provides an inverse to $k
\mapsto \{k,k+n_1,k+n_1+n_2\}$, so that there are $N$ \edit{2-simplices}
of Type I. Similarly there are $N$ \edit{2-simplices} of Type II.
\edit{No 2-simplex} is of both Types simultaneously. \edit{To see this,
suppose we have a face which is of both Types I and II.
It must  contain an interval $\{a, a+n_1 \}$, so we
let $\{a,a+n_1,b\}$ be of Type I and $\{a,a+n_1,c\}$ be of Type II, 
and suppose they are equal. Since the first is Type I, $b $ must be $ a+n_1+n_2$.
This follows from the inequality
$n_1 < n_2 < n_3$,
which implies that there is a unique edge of each length
$n_1$, $n_2$ and $n_3$.
Similarly, $c $ must be $ a -n_2$ since the basepoint of this chord is
$a+n_1$, so that it can be written $\{a+n_1, (a+n_1) - n_1, c = (a+n_1) - n_1 - n_2\}$.
For these two triads
to be equal, $b=c$, that is $a-n_2 = a+n_1 + n_2$, modulo $N$. Hence, $0 = n_1 +
2n_2$, modulo $N$.   Lemma~\ref{lemma1} implies that $ 0 < n_1 + 2n_2 < 3N/2$.
The only number congruent to $0$ modulo $N$ in this interval is $N$ itself,
so we conclude that $n_1 + 2n_2 = N = n_1 + n_2 + n_3$.  But this implies
$n_2 = n_3$, contradicting our hypothesis.  Hence our faces (2-simplices) have
a well-defined Type, either I or II, when $n_1 < n_2 < n_3$.}
\end{proof}

\begin{proposition}
If $n_1=n_2<n_3$ or $n_1<n_2=n_3$, then \edit{$|\cF|$} $= N$.
\label{faces3}
\end{proposition}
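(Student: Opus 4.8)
The plan is to reduce the count to the translations by showing that, in each of the two cases, the base triad $\{0,n_1,n_1+n_2\}$ is invariant under a suitable inversion, so that the Type~II (inversion) 2-simplices of Lemma~\ref{defin} all coincide with Type~I (translation) ones. Since $\mathcal{F}$ is the $T/I$-orbit of $\{0,n_1,n_1+n_2\}$, this collapse will show that $\mathcal{F}$ is exactly the set of translates, and it then remains to count these as $N$.

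First I would count the translates $\{k,k+n_1,k+n_1+n_2\}$ by the basepoint argument of Proposition~\ref{faces2}. In the case $n_1=n_2<n_3$ the edge of length $n_3$ is the unique longest edge of the triad, while in the case $n_1<n_2=n_3$ the edge of length $n_1$ is the unique shortest; in either situation the basepoint, namely the vertex at which an $n_3$-edge meets an $n_1$-edge, is recovered unambiguously from the set. (Lemma~\ref{lemma1}, giving $n_1,n_2<N/2$, guarantees the three vertices are genuinely distinct, so the edges are honest.) Hence $k\mapsto\{k,k+n_1,k+n_1+n_2\}$ is injective and there are exactly $N$ translates.

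The crux is to show that every inversion of the base triad is already a translate of it. For this I would exhibit an inversion $I_c(i)=c-i$ fixing $\{0,n_1,n_1+n_2\}$ setwise: take $c=n_1+n_2$ when $n_1=n_2$ and $c=n_1$ when $n_2=n_3$. A direct check—using $N=n_1+n_2+n_3$ together with the relevant equality of edge lengths, e.g. $n_1-(n_1+n_2)=-n_2\equiv n_1+n_3=n_1+n_2\pmod N$ in the second case—shows $I_c$ permutes the three vertices. Then for any $m$ one has $I_m=T_{m-c}\circ I_c$, so $I_m$ sends the base triad to a translate of it; thus every Type~II 2-simplex is also Type~I. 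Combining this with the count of the previous paragraph yields $|\mathcal{F}|=N$.

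I expect the main obstacle to be the bookkeeping at the split between the two cases: verifying that the chosen inversion really fixes the triad requires invoking the correct coincidence ($n_1=n_2$ versus $n_2=n_3$) and reducing the relevant residue modulo $N$ correctly, and one must confirm that the basepoint remains well-defined despite the repeated edge length. Everything else is routine once the reflection symmetry is in hand, since it is precisely this symmetry that identifies the $T/I$-orbit with the $T$-orbit and so halves the count of $2N$ from Proposition~\ref{faces2} down to $N$.
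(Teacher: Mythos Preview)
Your approach is essentially the paper's: show that every Type~II simplex is already Type~I (the paper does this by the reparametrization $j=k+2n_1$, you do it by exhibiting an inversion in the stabilizer---these are the same observation), then count the translates via a basepoint argument.

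One small correction: your claim that ``the vertex at which an $n_3$-edge meets an $n_1$-edge is recovered unambiguously'' is not literally true in either case. When $n_1=n_2<n_3$ both endpoints of the unique $n_3$-edge meet an $n_1$-edge, and when $n_1<n_2=n_3$ both endpoints of the unique $n_1$-edge meet an $n_3$-edge; so there are two candidate basepoints, exactly as the paper remarks (``there are two possible basepoints to a triad\ldots and the triad is of Type I with respect to one of them and of Type II with respect to the other''). The injectivity of $k\mapsto\{k,k+n_1,k+n_1+n_2\}$ still holds, but the clean way to see it is to recover instead the \emph{apex} vertex---the unique vertex not lying on the distinguished edge (the $n_3$-edge in the first case, the $n_1$-edge in the second)---which is $k+n_1$ in the first case and $k+n_1+n_2$ in the second; from this and the identified edge one recovers $k$. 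With that adjustment your argument is complete and matches the paper's.
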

\begin{proof}
\edit{Suppose $n_1=n_2<n_3$. In this case, the Type I triads can be written
$\{k,k+n_1,k+2n_1\}$. If we let $j=k+2n_1$ then the Type II triad
$\{j, j-n_1, j-2n_1\}$ is exactly the same as the Type I triad
$\{ k+2n_1, k+n_1, k\}$, so that the two Types coincide.  Seen another
way, there are two possible basepoints to a triad, since there are two edges
of length $n_1$, and the triad is of Type I with respect to one of them
and of Type II with respect to the other.  Thus, the number of faces is exactly the
number of Type I triads, and these are in one to one correspondence with
the vertices by taking the (Type I) basepoint of the triad.}

\edit{Exactly the same argument works in the case $n_1 < n_2 = n_3$.}
\end{proof}

Now that we have calculated the number of vertices, edges,
and faces for any simplicial complex $C(n_1,n_2,n_3)$, we can calculate
the Euler characteristic.

\begin{theorem}
\label{euler}
The Euler characteristic, $\chi$, of $C(n_1,n_2,n_3)$ is given by the following chart:
\begin{center}
\renewcommand{\arraystretch}{1.2}
\begin{tabular}{|c|c|c|c|c|}
\hline
Cases & $|\cV|$& $|\cE|$& $|\cF|$& $\chi$ \\
\hline
 $n_1 < n_2 < $\edit{$n_3$} and $n_3 \ne n_1 + n_2$ &  $N$ &  $3N$ &  $2N$ & $0$ \\
\hline
 $n_1 < n_2 < n_3$ and $n_3 = n_1 + n_2$ &  $N$ &  $5N/2$ & $2N$ &  $N/2$ \\
\hline
 $n_1 = n_2 < n_3$ and $n_3 \ne n_1 + n_2$ &  $N$ &  $2N$ &  $N$ &  $0$ \\
\hline
 $n_1 = n_2 < n_3$ and $n_3 = n_1 + n_2$ &  $N$ &  $3N/2$ &  $N$ &  $N/2$ \\
\hline
 $n_1 < n_2 = n_3$ &  $N$ &  $2N$ &  $N$&  $0$ \\
\hline
 $n_1 = n_2 = n_3$ &  $N$ &  $N$ &  $N/3$ &  $N/3$ \\
\hline
\end{tabular}
\end{center}
\end{theorem}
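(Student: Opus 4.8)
The plan is to treat this theorem as the bookkeeping step that assembles the three counts established above. For a two-dimensional simplicial complex the Euler characteristic is $\chi = |\cV| - |\cE| + |\cF|$, so once the three columns $|\cV|$, $|\cE|$, $|\cF|$ are filled in, the last column is forced. The vertex count needs nothing new: by Definition, the $0$-simplices are exactly $\Z/N$, so $|\cV| = N$ in every row. The edge column is copied verbatim from Theorem~\ref{edges2}, and the face column is supplied by Propositions~\ref{faces1}, \ref{faces2}, and \ref{faces3}. Thus no new geometric or combinatorial input is required; the substantive counting is already done.

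The one thing to verify with care is that the case partitions line up. The edge table of Theorem~\ref{edges2} uses six cases, distinguished first by the pattern of equalities among $n_1, n_2, n_3$ and then, where relevant, by whether $n_3 = n_1 + n_2$. The face propositions use a coarser partition: Proposition~\ref{faces2} covers both $n_1 < n_2 < n_3$ rows with $|\cF| = 2N$; Proposition~\ref{faces3} covers both $n_1 = n_2 < n_3$ rows together with the $n_1 < n_2 = n_3$ row, each giving $|\cF| = N$; and Proposition~\ref{faces1} gives $|\cF| = N/3$ in the fully symmetric case. Hence each of the six edge-rows inherits a well-defined face count, and the two partitions refine compatibly onto the six rows of the target table.

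It then remains to check exhaustiveness and do the arithmetic. The equality patterns $n_1 < n_2 < n_3$, $n_1 = n_2 < n_3$, $n_1 < n_2 = n_3$, and $n_1 = n_2 = n_3$ exhaust all admissible orderings, and the first two split further according to whether $n_3 = n_1 + n_2$. No split is needed for the remaining two patterns, since $n_3 = n_1 + n_2$ forces $n_1 = 0$ when $n_2 = n_3$ and forces $n_3 = 0$ when $n_1 = n_2 = n_3$, both impossible; this is why those patterns occupy single rows. Reading off $\chi = N - |\cE| + |\cF|$ row by row then gives $N - 3N + 2N = 0$, $N - 5N/2 + 2N = N/2$, $N - 2N + N = 0$, $N - 3N/2 + N = N/2$, $N - 2N + N = 0$, and $N - N + N/3 = N/3$, matching the table.

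The main (and essentially only) obstacle is this alignment-and-exhaustiveness check, which is routine once the prior counts are in hand. As an optional remark, one may invoke Lemma~\ref{lemma2} to rephrase the ``$n_3 = n_1 + n_2$'' splits as ``$n_3 = N/2$,'' which makes transparent that the nonzero values $\chi = N/2$ arise precisely for the chords containing a tritone, foreshadowing the non-surface behavior analyzed later.
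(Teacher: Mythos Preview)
Your proof is correct and follows the same approach as the paper, which simply cites the definition for $|\cV|$, Theorem~\ref{edges2} for $|\cE|$, and Propositions~\ref{faces1}--\ref{faces3} (together with Lemma~\ref{defin}) for $|\cF|$. Your version is in fact more thorough, since you explicitly check that the case partitions align, verify exhaustiveness, and carry out the arithmetic row by row.
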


\begin{proof}
The $\cV$ column is clear.  The $\cE$ column is proved in 
Theorem~\ref{edges2}, and the $\cF$ column is proved in
Lemma~\ref{defin}
and 
Propositions~\ref{faces1}-\ref{faces3}.
\end{proof}

For further emphasis, let us check our work in Section 3. In the
\edit{chromatic} scale, we saw that the major and minor triads,
$C(3,4,5)$, form a torus. Using the chart above, we see that $C(3,4,5)$
is in the first row, so must have Euler characteristic 0. In fact, the
Euler characteristic is indeed 0, as expected \edit{from elementary
topology}.

\section{Topological Properties}
\label{topprop}
\edit{We now focus our attention on some basic topological properties of the spaces $C(n_1,n_2,n_3)$.
 Concepts like path-connectedness and homogeneity are discussed here, and these types of
 topological 
properties will prove very useful in our classification of $C(n_1,n_2,n_3)$
as a space.}
\subsection{Connectedness}
\label{connectedness}
\edit{To determine whether $C(n_1,n_2,n_3)$ is connected or not, and if not, how many components
it has, we use algebraic topology and elementary number theory.}\\

Let $H_n(X;R)$ denote the \edit{n-th} homology group of a space $X$ over
a ring $R$. Let us recall some basic facts from algebraic topology
\cite{Greenberg}. First, it is well known that $H_0(X;R) = R^d$, where
$d$  is the number of \edit{connected} components of $X$.
(\edit{Recall} that \edit{connected} components and path-components are the
same for a simplicial complex, \edit{so we may abbreviate this to {\em component}
without ambiguity.})
This allows us to compute the number
of  components of \edit{the simplicial complex
$C(n_1,n_2,n_3)$ from its 0-th homology.} In the following discussion,
consider the free abelian group \edit{$\Z[\cV]$} generated by the
vertices \edit{$\cV = \mathbb{Z}/N$ of $C(n_1,n_2,n_3)$}. This is the
group of 0-chains or 0-cycles (since every 0-chain is a cycle). Thus, we
see
\[
  H_0(X;\Z)=\frac{Z_0(X)}{B_0(X)} \cong {\displaystyle{\frac{\Z[\cV]}{\sim}}}
\]
where $\sim$ is given by \edit{all} linear combinations of
boundaries of edges. These linear combinations simply group the vertices
into groups according to the component they lie in. \edit{For example,
consider the linear combination $\partial(\{0,1\} + \{1,2\}) =
\{1\}-\{0\} + \{2\} - \{1\} = \{2\}-\{0\}$. This calculation reflects
the fact that, since there exist edges from $\{0\}$ to $\{1\}$ and
$\{1\}$ to $\{2\}$, there exists a path from $\{0\}$ to $\{2\}$, as seen
in Figure~\ref{simpcompex}.}

\edit{We now recall without proof a basic proposition 
concerning greatest common divisors
from elementary number theory. }

\begin{proposition} 
\edit{
If $m_1,\ldots,m_p$ are integers, then the subgroup of $\mathbb{Z}$ generated by 
$m_1,\ldots,m_p$ is $d\mathbb{Z}$ where $d$ is the greatest common divisor of $m_1,\ldots,m_p$.
}
\label{gcdprop}
\end{proposition}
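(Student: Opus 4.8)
The plan is to prove this in two movements: first establish that every subgroup of $\mathbb{Z}$ is cyclic, and then identify the generator with the greatest common divisor. Write $H$ for the subgroup of $\mathbb{Z}$ generated by $m_1,\ldots,m_p$; concretely, $H$ is the set of all integer linear combinations $\{a_1 m_1 + \cdots + a_p m_p \mid a_i \in \mathbb{Z}\}$. If every $m_i$ is zero then $H = \{0\} = 0\cdot\mathbb{Z}$ and $d = 0$, so I would assume $H \neq \{0\}$ from this point on.

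The first step is to locate a canonical generator. Since $H$ contains a nonzero element and is closed under negation, it contains a positive element, so by the well-ordering principle it has a least positive element $d'$. I would then apply the division algorithm: given any $h \in H$, write $h = q d' + r$ with $0 \le r < d'$. Because $r = h - qd' \in H$ and $d'$ is the smallest positive element of $H$, minimality forces $r = 0$, so $h$ is a multiple of $d'$. This shows $H = d'\mathbb{Z}$.

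The second step is to verify $d' = d$. Applying the division argument above to each generator $m_i \in H$ shows $d' \mid m_i$ for every $i$, so $d'$ is a common divisor of $m_1,\ldots,m_p$. Conversely, any common divisor $e$ of the $m_i$ divides every integer linear combination, hence divides every element of $H$, and in particular $e \mid d'$. Thus $d'$ is a common divisor that is divisible by every common divisor, which is exactly the characterizing property of the greatest common divisor; therefore $d' = d$ and $H = d\mathbb{Z}$.

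There is no serious obstacle here, as the statement is a cornerstone of elementary number theory; the only point demanding a little care is the final identification, where one must invoke the \emph{divisibility} characterization of the gcd (every common divisor divides it) rather than merely its numerical maximality. Since that characterization follows immediately from the two observations just made, the argument remains self-contained.
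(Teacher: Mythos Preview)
Your proof is correct and complete; this is the standard argument via the well-ordering principle and the division algorithm, and your care in using the divisibility characterization of the gcd is appropriate. The paper itself does not prove this proposition at all: it is introduced with the phrase ``We now recall without proof a basic proposition concerning greatest common divisors from elementary number theory,'' so there is no proof in the paper to compare against. Your argument simply supplies what the paper takes for granted.
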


\begin{theorem}
$H_0(C(n_1,n_2,n_3);\Z)=\Z^{\gcd(n_1,n_2,n_3)}$.
\end{theorem}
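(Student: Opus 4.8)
The plan is to use the identification $H_0(C(n_1,n_2,n_3);\Z) \cong \Z[\cV]/{\sim}$ recalled just above the statement, where $\Z[\cV]$ is the free abelian group on the vertices $\cV = \Z/N$ and the relation is generated by the boundaries of edges. Since every edge is a translate of $\{0,n_1\}$, $\{0,n_2\}$, or $\{0,n_3\}$, the boundary subgroup $B_0$ is generated by the elements $\{k+n_i\} - \{k\}$ for $i \in \{1,2,3\}$ and $k \in \Z/N$. First I would observe that quotienting by $B_0$ is exactly the operation of identifying two vertices $\{a\}$ and $\{b\}$ whenever their difference $a-b$ lies in the subgroup $G \subseteq \Z/N$ generated by $n_1,n_2,n_3$: if $a-b \in G$, a choice of expression for $a-b$ as a combination of the $n_i$ yields an edge-path from $a$ to $b$ and hence $\{a\}-\{b\} \in B_0$, and conversely. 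Consequently $H_0$ is free abelian on the set of cosets of $G$, and its rank equals the index $[\Z/N : G]$, which is also the number of connected components.

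The key computation is then to identify $G$ and its order. I would lift to $\Z$: the subgroup of $\Z$ generated by $n_1,n_2,n_3$ is $d\Z$ with $d = \gcd(n_1,n_2,n_3)$, by Proposition~\ref{gcdprop}. Because $N = n_1+n_2+n_3$ is itself an integer combination of the $n_i$, we have $N \in d\Z$, so $d \mid N$, and adjoining $N$ to the generating set does not enlarge the integer subgroup past $d\Z$. Reducing modulo $N$, the image of $d\Z$ in $\Z/N$ is the cyclic subgroup $\langle d \rangle$, which has order $N/d$. Hence $G = \langle d \rangle$ with $|G| = N/d$.

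Combining these, the number of cosets is $[\Z/N : G] = N/(N/d) = d$, so $H_0(C(n_1,n_2,n_3);\Z)$ is free abelian of rank $d = \gcd(n_1,n_2,n_3)$, that is $\Z^{\gcd(n_1,n_2,n_3)}$.

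The main obstacle, modest as it is, lies in the passage between subgroups of $\Z$ and of $\Z/N$: one must verify that reducing the generating set modulo $N$ does not change the gcd, which is precisely guaranteed by the relation $N = n_1+n_2+n_3$ forcing $N$ into the integer subgroup generated by $n_1,n_2,n_3$. Once this is in hand, the index computation and the appeal to the fact that $H_0 \cong \Z^{d}$ with $d$ the number of components are routine.
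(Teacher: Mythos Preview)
Your proof is correct and follows essentially the same route as the paper's: both identify $H_0$ with the free abelian group on the set of equivalence classes of vertices under the relation generated by the edges, recognize these classes as the cosets of $\langle n_1,n_2,n_3\rangle$ in $\Z/N$, and then use $N=n_1+n_2+n_3$ together with Proposition~\ref{gcdprop} to compute that there are $\gcd(n_1,n_2,n_3)$ such cosets. The paper phrases the passage from $\Z/N$ to $\Z$ via the third isomorphism theorem, while you spell it out as ``$N\in d\Z$ so adjoining $N$ does not enlarge the subgroup''; these are the same observation.
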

\begin{proof}
From the discussion above, we conclude that $H_0 =
\Z[\cV/{\displaystyle{\sim}}]$, where $\sim$ means `is connected by a
sequence of edges'. Now $\cV = \Z/N$ and the relation $\{k\}\sim
\{l\}$ is the equivalence relation generated by $k$ differs from
$l$ by either $n_1$, $n_2$, or $n_3$. Hence,
\[ \cV/{\displaystyle{\sim}} \cong (\Z/N)/\langle n_1,n_2,n_3 \rangle \cong \Z/{\langle n_1,n_2,n_3\rangle}
\]
since $N=n_1+n_2+n_3$ and by the third isomorphism theorem (see \cite{Weibel}).
By \edit{Proposition~\ref{gcdprop}}, we see that
\[
\Z/{\langle n_1,n_2,n_3\rangle} \cong \Z/{\gcd(n_1,n_2,n_3)}.
\]
Hence, we see that $H_0(C(n_1,n_2,n_3);\Z)=\Z^{\gcd(n_1,n_2,n_3)}$.
\end{proof}

\begin{corollary}
\label{connected}
The space $C(n_1,n_2,n_3)$ is connected if and only if $\gcd(n_1,n_2,n_3) = 1$.
\end{corollary}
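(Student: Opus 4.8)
The plan is to deduce this immediately from the preceding theorem together with the standard identification of zeroth homology with connected components. Recall the fact recorded at the start of this subsection: for any space $X$ one has $H_0(X;R) = R^d$, where $d$ is the number of connected components of $X$. Taking $R = \Z$, the space $X$ is connected precisely when $d = 1$, that is, when $H_0(X;\Z) \cong \Z$.

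Applying this to $X = C(n_1,n_2,n_3)$ and invoking the preceding theorem, which computes $H_0(C(n_1,n_2,n_3);\Z) = \Z^{\gcd(n_1,n_2,n_3)}$, I would simply read off that the number of connected components of $C(n_1,n_2,n_3)$ equals $\gcd(n_1,n_2,n_3)$. The space is therefore connected if and only if this exponent is $1$, i.e. if and only if $\gcd(n_1,n_2,n_3) = 1$, which is exactly the claimed equivalence.

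There is essentially no obstacle here, since all the genuine content has already been absorbed into the homology computation of the previous theorem; the corollary is a one-line translation of that result into the language of connectedness. The only point worth a moment's care is that the identification $H_0(X;\Z) = \Z^d$ presupposes $d \geq 1$, which is guaranteed because $C(n_1,n_2,n_3)$ is nonempty: its vertex set is $\Z/N$ with $N = n_1+n_2+n_3 \geq 3$. Thus the equivalence is non-degenerate, matching the component count against $\gcd(n_1,n_2,n_3)$ rather than reducing to a vacuous statement.
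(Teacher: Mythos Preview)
Your argument is correct and follows essentially the same route as the paper: both deduce the corollary immediately from the preceding theorem $H_0(C(n_1,n_2,n_3);\Z)=\Z^{\gcd(n_1,n_2,n_3)}$ together with the fact that $H_0$ counts connected components. The paper's proof is a single sentence to this effect; your added remark on nonemptiness is harmless elaboration.
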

\begin{proof}
This follows immediately from the above theorem, noting that $H_0$ detects the number of connected components of a \edit{simplicial complex}.
\end{proof}

\edit{As an example, let us apply the above corollary to $C(3,4,5)$. Since $\gcd(3,4,5) = 1$, 
 Corollary~\ref{connected} implies that the space $C(3,4,5)$ is connected. This is just confirmation
 of what we already knew: the torus is connected.}

\begin{theorem}
\label{disj}
The simplicial complex $C(dn_1,dn_2,dn_3)$ is the disjoint union of $C(n_1,n_2,n_3)$. Specifically, 
\[
C(dn_1,dn_2,dn_3) = \coprod_{d}C(n_1,n_2,n_3).
\]
\end{theorem}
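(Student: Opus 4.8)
The plan is to exploit the fact that every edge and face of $C(dn_1,dn_2,dn_3)$ joins vertices differing by a multiple of $d$, so the complex splits along residue classes modulo $d$. First I would record the setup: $C(dn_1,dn_2,dn_3)$ has vertex set $\mathbb{Z}/(dN)$, since $dn_1+dn_2+dn_3 = dN$, and the hypotheses $1 \le n_1 \le n_2 \le n_3 < N$ give $1 \le dn_1 \le dn_2 \le dn_3 < dN$, so the labeling convention of the Definition is preserved and $C(dn_1,dn_2,dn_3)$ is legitimately defined. I would then partition $\mathbb{Z}/(dN)$ into the $d$ residue classes $V_r = \{r + jd \mid 0 \le j < N\}$ for $r = 0, \ldots, d-1$, each of size $N$.

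Next I would observe that the generating translations of the edges $\{0,dn_1\}$, $\{0,dn_2\}$, $\{0,dn_3\}$ and the translated and inverted faces described in Lemma~\ref{defin} all move a vertex by a multiple of $d$. Hence every simplex of $C(dn_1,dn_2,dn_3)$ has all of its vertices inside a single $V_r$. This already exhibits $C(dn_1,dn_2,dn_3)$ as a disjoint union of $d$ subcomplexes $C_0,\ldots,C_{d-1}$, where $C_r$ is the full subcomplex supported on $V_r$ and no simplex joins distinct classes.

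The heart of the argument is to produce a simplicial isomorphism $C(n_1,n_2,n_3) \cong C_r$ for each $r$. I would define $\phi_r \colon \mathbb{Z}/N \to V_r$ by $\phi_r(j) = r + jd$, which is a bijection of vertex sets. Under $\phi_r$ the edge $\{j, j+n_i\}$ maps to $\{r+jd, r+jd+dn_i\}$, a translate of the length-$dn_i$ generating edge, and the Type I face $\{j, j+n_1, j+n_1+n_2\}$ maps to $\{r+jd, r+jd+dn_1, r+jd+dn_1+dn_2\}$, a Type I face of $C(dn_1,dn_2,dn_3)$; likewise $\phi_r$ sends Type II faces to Type II faces. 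Conversely every simplex of $C_r$ has the form above with $k = r+jd$, so $\phi_r$ carries simplices onto simplices in both directions and is a simplicial isomorphism. Since the $C_r$ are pairwise disjoint, their union is all of $C(dn_1,dn_2,dn_3)$ and each is isomorphic to $C(n_1,n_2,n_3)$, giving $C(dn_1,dn_2,dn_3) = \coprod_d C(n_1,n_2,n_3)$.

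The only real obstacle is the bookkeeping in the isomorphism step: verifying that $\phi_r$ is surjective onto the simplices of $C_r$ and injective on them, including the exceptional cases where Type I and Type II faces coincide (as in Propositions~\ref{faces1} and \ref{faces3}). This is more tedious than deep, however, and becomes routine once the residue-class decomposition is in hand, since the face count and the Type I/II description transfer verbatim through $\phi_r$.
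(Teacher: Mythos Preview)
Your proposal is correct and follows essentially the same approach as the paper: both decompose $C(dn_1,dn_2,dn_3)$ according to the cosets of $\langle d\rangle$ in $\mathbb{Z}/(dN)$, using the observation that every simplex connects vertices differing by multiples of $d$. The paper's proof is quite terse (three sentences) and leaves the identification of each coset-piece with $C(n_1,n_2,n_3)$ implicit, whereas you spell out the simplicial isomorphism $\phi_r$ explicitly; this extra care is harmless and arguably an improvement.
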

\begin{proof}
A 2-simplex with edges of \edit{lengths} $dn_1$, $dn_2$, and $dn_3$, must
connect vertices which differ by multiples of $d$. Hence, the vertices
in any simplex must lie in a \edit{coset of $\langle d \rangle $ in $\Z/N$}. Thus, the
set of all simplices can be decomposed into a disjoint union over
\edit{these cosets}.
\end{proof}

Corollary~\ref{connected} shows that if the $\gcd(n_1,n_2,n_3) = 1$,
then $C(n_1,n_2,n_3)$ cannot be decomposed further. Hence, if
$\gcd(n_1,n_2,n_3)=1$, Theorem~\ref{disj} gives the decomposition of
$C(dn_1,dn_2,dn_3)$ into its connected components.  

Since the analogues of the neo-Riemannian $P$, $L$, and $R$ operations flip triads across edges, they cannot take a triad in one component to a triad in a different
component. Thus, when $C(n_1,n_2,n_3)$ is disconnected, the group generated by the $P$, $L$, and $R$ analogues cannot act simply transitively on the triads in $C(n_1,n_2,n_3)$, and this group is a proper subgroup of Fiore--Satyendra's generalized contextual group associated to $\{0,n_1,n_1+n_2\}$ in \cite{Fiore2}.  For example, in $C(2, 4, 6)$, no combination of the $P$, $L$, and $R$ analogues takes $\{0, 2, 6\}$ to $\{1, 3, 7\}$, though the contextual operation $Q_1$ certainly does.

\subsection{Homogeneity}

\edit{In general, homogeneity means that every point is the same as every other
in some appropriate sense. We say $C(n_1,n_2,n_3)$ is {\em homogeneous for $n$-simplices}
if for any two $n$-simplices $\sigma$ and $\sigma'$ there is an automorphism
of the simplicial complex $C(n_1,n_2,n_3)$ which maps $\sigma$ to $\sigma'$.
 For our simplicial complexes, we can ask for such
homogeneity among vertices, among edges, or among 2-simplices.  Since
$T_{j-i}(i) = j$, we clearly have homogeneity among vertices. This means the
analysis of any one vertex suffices to describe them all.}

\edit{We also have homogeneity among 2-simplices using the full $T/I$ group.
For any two 2-simplices of the same Type, there is a translation taking one to the other.
Inversion, on the other hand, converts a simplex from one Type to the other.
Hence, given any two 2-simplices $A$ and $B$, there is a member of the full $T/I$ group
which sends $A$ to $B$.  Therefore, our analysis of any one 2-simplex suffices to
describe all 2-simplices.}

\edit{However, we do not have homogeneity among the 1-simplices.
For example, cylinders and M\"obius
bands (e.g., Mazzola's Harmonic strip)
have boundary edges and interior edges, which cannot be sent to one another by
an invertible map of simplicial complexes, since such a map will preserve the number
of 2-simplices an edge is contained in.  Similarly, in the circle of tetrahedra boundaries,
there are edges (corresponding to tritones)
which lie in four 2-simplices and edges which lie in only two 2-simplices,
so there is no invertible simplicial map which can send the first sort to the second,
or vice versa.}

\section{Classification of $C(n_1,n_2,n_3)$ as a space.}
\label{classpa}

We are now ready to classify all of the possible
spaces of triads. We begin by considering the 
number of simplices which contain a given dege, since this allows us to
distinguish between surfaces with boundary, surfaces without boundary,
and non-surfaces by Theorem~\ref{edges}. Lemma~\ref{halftotlemma1}
shows that an edge can be contained in only one or two 2-simplices
unless $n_3 = N/2$, since Lemma~\ref{lemma1} eliminates the possibility 
that $n_1$ or $n_2$ can be $N/2$. When $n_3 \neq N/2$, we
obtain a surface with or without boundary, in Theorems~\ref{tori}
through \ref{nnnplusk}. Finally, Theorem~\ref{circleoftetra} handles the case when
$n_3 = N/2$, in which $C(n_1,n_2,n_3)$ is not a surface. Note that
Lemma~\ref{lemma2} says that $n_3 = N/2$ is equivalent to $n_3 = n_1 + n_2$.

\subsection{When is $C(n_1,n_2,n_3)$ a surface without boundary?}

\label{surfwobdy}
\edit{The first step in our classification of $C(n_1,n_2,n_3)$ as a space begins with surfaces
without boundary, or just surfaces. We define this notion, then investigate the pertinent cases.}
We prove if $C(n_1,n_2,n_3)$ is a surface without boundary, then it is a disjoint 
union of tori or a disjoint union of spheres.

\begin{definition}
A \emph{surface without boundary} (or just \emph{surface}) is a \edit{second countable} Hausdorff, topological space in which every point has an open neighborhood homeomorphic to some open subset of $\R^2$.
\end{definition}

\edit{
These include the sphere, which is topologically equivalent to the boundary of a 
tetrahedron, and the torus.  The requirement that a surface be second countable and Hausdorff
serves to eliminate pathological examples.
}

\begin{definition}
A \emph{surface with boundary} is a \edit{second countable} Hausdorff, topological
space whose points have neighborhoods which are homeomorphic to open
subsets of $\R^2_+=\{(x,y)\in\R^2:y\geq0\}$.  
\end{definition} 

Points
which have an open neighborhood homeomorphic to an open subset of $\R^2$
are called {\em interior points}. The other points are called {\em boundary points}.
\edit{
Examples include the 2-simplex and rectangle, whose boundary points are their
perimeters, the cylinder, whose boundary consists of two circles, and
the M\"obius band, whose boundary is a single circle.
}

There are two key properties required for a 2-dimensional simplicial
complex to be a surface: (i) \edit{each edge must lie in exactly two
2-simplices}, and (ii) at each vertex, the \edit{2-simplices} containing
the vertex can be cyclically ordered (sharing an edge with each
neighbor). To be a surface with boundary: (i) \edit{each} edge must be
contained in one or two \edit{2-simplices}, and (ii) at each vertex, the
\edit{2-simplices} containing the vertex can be placed in
\edit{
either a cyclic order (in which case the vertex is an interior point)
or a linear order (in which case the vertex is a boundary point)}. 

\edit{We begin by analyzing the first property of a surface noted above.} We \edit{prove} the following lemma in order to make the identification of edges simpler. Let $\{i,j,k\}=\{1,2,3\}$, so that $\{n_i,n_j,n_k\}=\{n_1,n_2,n_3\}$. \edit{Recall that we assume $n_1 +n _2 + n_3 = N$ and $1 \leq n_1 \leq n_2 \leq n_3 < N$.}

\begin{lemma} 
\label{halftotlemma1}
If $n_i \neq N/2$, then each edge of length $n_i$ \edit{in $C(n_1,n_2,n_3)$ is contained in exactly one or two 2-simplices}.
\end{lemma}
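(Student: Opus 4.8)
The plan is to fix an edge $e$ of step length $n_i$ and count the $2$-simplices containing it by parametrizing them by their third vertex. Write $e = \{p, p+n_i\}$ for some $p \in \Z/N$. Any $2$-simplex containing $e$ has the form $\{p, p+n_i, v\}$ for a third vertex $v \neq p, p+n_i$, and by the description of faces in Lemma~\ref{defin} (equivalently, because every face is a $T/I$-translate of $\{0,n_1,n_1+n_2\}$) such a triple is a face exactly when the three cyclic gaps between its vertices, read clockwise around the musical circle, form the multiset $\{n_1, n_2, n_3\}$. So the task reduces to counting the admissible positions of $v$.

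The crux is to show that $v$ cannot lie strictly inside the clockwise arc of length $n_i$ running from $p$ to $p+n_i$. If it did, the three gaps of $\{p, v, p+n_i\}$ would be $v - p$, $(p+n_i) - v$, and $N - n_i$, so in particular $N - n_i$ would have to equal one of $n_1, n_2, n_3$. I would rule this out from $N = n_1+n_2+n_3$ by a short case check: writing $N - n_i = n_m$ as $n_i + n_m = n_1 + n_2 + n_3$ forces either a third step length to vanish (when $i \neq m$) or $n_i$ to equal the sum of the other two (when $i = m$); the latter is impossible for $i \in \{1,2\}$ by Lemma~\ref{lemma1} and is exactly the excluded case $n_i = n_3 = N/2$ for $i = 3$ by Lemma~\ref{lemma2}. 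This step, which is where the hypothesis $n_i \neq N/2$ enters, is the only real obstacle; everything else is bookkeeping.

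With $v$ thus confined to the complementary clockwise arc, from $p+n_i$ to $p$, of length $N - n_i = n_j + n_k$ (where $\{i,j,k\} = \{1,2,3\}$), the gap of length $n_i$ is already accounted for, and the two remaining gaps $s = v - (p+n_i)$ and $t = N - n_i - s$ satisfy $s + t = n_j + n_k$. Admissibility forces $\{s, t\} = \{n_j, n_k\}$, which has at most two solutions: the positions $v = p + n_i + n_j$ and $v = p + n_i + n_k$ when $n_j \neq n_k$, and a single position when $n_j = n_k$. Hence $e$ lies in at most two $2$-simplices. Since $e$ is by definition an edge of some $2$-simplex, it lies in at least one, so the count is exactly one or two, as claimed. (If desired, one checks that each admissible $v$ does yield a genuine face, using that both cyclic orderings of $\{n_1,n_2,n_3\}$ occur in the $T/I$-orbit, one from a translation and one from an inversion; but this is not needed for the upper bound.)
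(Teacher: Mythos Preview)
Your proof is correct and follows essentially the same route as the paper: write the edge as $\{a,a+n_i\}$, parametrize the $2$-simplices containing it by their third vertex, and arrive at the two candidates $a+n_i+n_j$ and $a+n_i+n_k$ (coinciding when $n_j=n_k$). The paper's argument is terser---it invokes $n_i\neq N/2$ only to note that the representation $\{a,a+n_i\}$ is unique and then asserts the two candidate positions directly---whereas you supply the intermediate justification (your ``crux'' step) that the third vertex cannot lie on the short arc of length $n_i$, which is precisely where the hypothesis $n_i\neq N/2$ does its work; so your version is a more careful execution of the same idea rather than a different approach.
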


\begin{proof}
Since $n_i \neq N/2$, an edge of length $n_i$ can be written uniquely as $\{a,a+n_i\}$. \edit{The
2-simplices} containing this edge must have vertices $\{a,a+n_i,b\}$. We see that either $b= a+n_i
+ n_j$ or $b = a + n_i + n_k$. Hence, there exist \edit{two 2-simplices containing
this edge} if $n_j \neq n_k$ and \edit{one such 2-simplex} if $n_j = n_k$.
\end{proof}


\begin{lemma}
\label{halftotlemma2}
If $n_i = N/2$, then \edit{each} edge of length $n_i$ \edit{in $C(n_1,n_2,n_3)$ is contained in exactly two or four 2-simplices}.
\end{lemma}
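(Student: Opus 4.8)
The plan is to reduce to the only situation in which the hypothesis can hold, and then enumerate the 2-simplices on a length-$N/2$ edge directly, paralleling the argument of Lemma~\ref{halftotlemma1} but tracking the extra symmetry that a tritone edge enjoys. First I would observe that $n_i = N/2$ forces $i = 3$: by Lemma~\ref{lemma1} we have $n_1 < N/2$ and $n_2 < N/2$, so $n_3 = N/2$ is the only possibility. By Lemma~\ref{lemma2}, $n_3 = N/2$ is equivalent to $n_3 = n_1 + n_2$, so in this case $n_1 + n_2 = N/2$ and $N = 2n_3$. An edge of length $N/2$ can be written $\{a, a+N/2\}$, but now its two endpoints play symmetric roles, since $(a+N/2) + N/2 = a$; thus the same set is also $\{a+N/2, (a+N/2)+N/2\}$, and the proof must keep track of both descriptions.

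Next I would enumerate every 2-simplex containing $\{a, a+N/2\}$. Since each 2-simplex is a $T/I$-translate of $\{0, n_1, N/2\}$, and in that template the unique longest edge (length $n_3 = N/2$, which exceeds $n_1$ and $n_2$) joins $0$ to $N/2$ with the third vertex at $n_1$, any 2-simplex on our edge has the form $\{a, a+N/2, b\}$ with $\{a,a+N/2\}$ as its length-$N/2$ edge. Using the Type~I and Type~II descriptions from Lemma~\ref{defin}, namely $\{k, k+n_1, k+N/2\}$ and $\{k, k-n_1, k+N/2\}$, and imposing that the length-$N/2$ edge $\{k, k+N/2\}$ equal $\{a, a+N/2\}$ (so $k = a$ or $k = a+N/2$), I would read off the four candidate third vertices
\[
  b \in \{\, a+n_1,\ a+N/2+n_1,\ a-n_1,\ a+N/2-n_1 \,\}.
\]

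Finally I would count how many of these four values are distinct in $\Z/N$, which is exactly the number of 2-simplices on the edge, since two triads sharing the edge are equal precisely when their third vertices agree. All pairwise differences reduce to $\pm 2n_1$, $N/2$, or $N/2 \pm 2n_1$; because $0 < n_1 < N/2$ rules out $2n_1 \equiv 0$ and $N \geq 2$ rules out $N/2 \equiv 0$, the only possible collision is $2n_1 \equiv N/2 \pmod N$, i.e.\ $n_1 = N/4$, which (using $2n_1 = n_1 + n_2$) is equivalent to $n_1 = n_2$. When $2n_1 \not\equiv N/2$ the four third vertices are distinct and the edge lies in four 2-simplices; when $2n_1 \equiv N/2$ they collapse in pairs to two values and the edge lies in two 2-simplices. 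Either way the count is exactly two or four, as claimed. The step requiring the most care is the bookkeeping in the previous paragraph: the two-fold description of the tritone edge combined with the two Types must be handled so that no 2-simplex is missed or double-counted. Once the four candidates are correctly listed, the coincidence analysis is a short computation in $\Z/N$.
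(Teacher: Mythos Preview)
Your proposal is correct and follows essentially the same approach as the paper: both arguments first reduce to $i=3$ via Lemma~\ref{lemma1}, then exploit the fact that the tritone edge $\{a,a+N/2\}$ has no preferred endpoint, and enumerate the possible third vertices, finding four when $n_1\neq n_2$ and two when $n_1=n_2$. The paper phrases this geometrically (``the edge divides the musical circle in half and the third vertex can be in either half, with one or two choices per side''), whereas you carry out the same enumeration algebraically by listing the four candidates $a\pm n_1$, $a+N/2\pm n_1$ and checking coincidences; the content is the same.
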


\begin{proof}
We see that $i = 3$ \edit{(by Lemma~\ref{lemma1})} and $\{a,a+n_3\}$ no longer determines which vertex is $a$. The edge divides the
musical circle in half and the third vertex of the \edit{2-simplex} containing it can be in either half. \edit{If} $n_1 \neq n_2$,
there are two choices on either side of the circle, yielding a total of four \edit{2-simplices} which contain the edge. On the other
hand, if $n_1 = n_2$, then there is a unique choice on each side, giving only two such \edit{2-simplices}.
\end{proof}

The above calculations allow us to determine when $C(n_1,n_2,n_3)$ is a surface with or without boundary. If an edge is contained in more than two \edit{2-simplices}, then the space \edit{will} have singularities, and is not a surface. If an edge \edit{is} only contained in one \edit{2-simplex}, then the space \edit{will} have a boundary. 

\begin{theorem}
\label{edges}
The number of \edit{2-simplices in $C(n_1,n_2,n_3)$} containing an edge of length $n_i$ is determined by the following chart:
\begin{center}
  \begin{tabular}{|c|c|c|}
    \hline
    & $n_i \neq N/2$ & $n_i = N/2$ \\
    \hline
    $n_j = n_k$ & 1 & 2 \\
    \hline
    $n_j \neq n_k$ & 2 & 4 \\
    \hline
  \end{tabular}
\end{center}
\end{theorem}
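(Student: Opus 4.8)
The plan is to synthesize the two preceding lemmas into the single chart, since Theorem~\ref{edges} is really just a tabulation of Lemmas~\ref{halftotlemma1} and~\ref{halftotlemma2}. I would first observe that every edge has a well-defined length $n_i \in \{n_1, n_2, n_3\}$, and that the chart is indexed by two independent binary conditions: whether $n_i = N/2$ (the two columns) and whether the other two step sizes $n_j, n_k$ coincide (the two rows). So the proof amounts to checking that each of the four cells agrees with what the lemmas already established, together with verifying that these really are the only relevant cases.

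The first step is the left column, $n_i \neq N/2$. Here I would invoke Lemma~\ref{halftotlemma1} directly: an edge of length $n_i$ with $n_i \neq N/2$ lies in exactly one or two 2-simplices. The proof of that lemma shows the third vertex $b$ of a containing 2-simplex $\{a, a+n_i, b\}$ must satisfy $b = a+n_i+n_j$ or $b = a+n_i+n_k$, and these two choices coincide precisely when $n_j = n_k$. Thus the $n_j = n_k$ cell gives $1$ and the $n_j \neq n_k$ cell gives $2$, matching the top and bottom of the left column. The second step is the right column, $n_i = N/2$. Here I would apply Lemma~\ref{halftotlemma2}: when $n_i = N/2$, the edge fails to determine which endpoint is the basepoint, so the third vertex can lie in either half of the musical circle, doubling the count relative to the left column. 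The lemma gives two 2-simplices when $n_1 = n_2$ and four when $n_1 \neq n_2$; since $n_i = N/2$ forces $i = 3$ by Lemma~\ref{lemma1}, the condition $n_j = n_k$ becomes exactly $n_1 = n_2$, so these populate the top and bottom of the right column with $2$ and $4$ respectively.

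The one point that requires a little care, rather than being purely mechanical, is confirming that the row labels are consistent across the two columns. In the left column the relevant distinction (from Lemma~\ref{halftotlemma1}) is $n_j = n_k$ versus $n_j \neq n_k$ for arbitrary $i$, whereas in the right column Lemma~\ref{halftotlemma2} is phrased in terms of $n_1$ versus $n_2$ with $i$ already pinned to $3$. I would reconcile these by noting that when $i = 3$ we have $\{n_j, n_k\} = \{n_1, n_2\}$, so $n_j = n_k$ is literally the statement $n_1 = n_2$, and the two row conditions are the same. This is the step most likely to trip up a careless reader, but it is not a genuine obstacle. Everything else is a transcription of the two lemmas into tabular form, so I would keep the proof to a single sentence pointing to Lemmas~\ref{halftotlemma1} and~\ref{halftotlemma2} and remarking that $n_i = N/2$ forces $i = 3$ so that the row condition $n_j = n_k$ coincides with $n_1 = n_2$ in the right-hand column.
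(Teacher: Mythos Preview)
Your proposal is correct and takes essentially the same approach as the paper, which simply states that the result follows directly from Lemmas~\ref{halftotlemma1} and~\ref{halftotlemma2} and their proofs. Your additional remark reconciling the row condition $n_j = n_k$ with $n_1 = n_2$ in the right column (via Lemma~\ref{lemma1} forcing $i=3$) is a helpful clarification, but it does not depart from the paper's argument.
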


\begin{proof}
\edit{This follows} directly from \edit{Lemmas \ref{halftotlemma1} and \ref{halftotlemma2}} and their proofs.
\end{proof}

\edit{The other key property of a surface without boundary is the ordering of the 2-simplices
in a cyclic fashion around each vertex.}
In order to investigate \edit{this} criterion, we \edit{list} all the possible \edit{2-simplices in
$C(n_1,n_2,n_3)$} which \edit{could}
contain the vertex $\{a\}$. 

\begin{enumerate}
\item $\{a,a+n_1,a+n_1+n_2\}$
\item $\{a, a+n_1,a+n_1+n_3\}$
\item $\{a,a+n_2,a+n_1+n_2\}$
\item $\{a,a+n_2,a+n_2+n_3\}$
\item $\{a,a+n_3, a+n_1+n_3\}$
\item $\{a, a+n_3,a+n_2+n_3\}$
\end{enumerate}

When we have \edit{additional} relations among the numbers $n_1,n_2$,
and $n_3$, we expect \edit{some elements of} this list to be redundant.
Observe that we can arrange the \edit{2-simplices} in this list
\edit{cyclically} as shown in Figure~\ref{hexagon}.
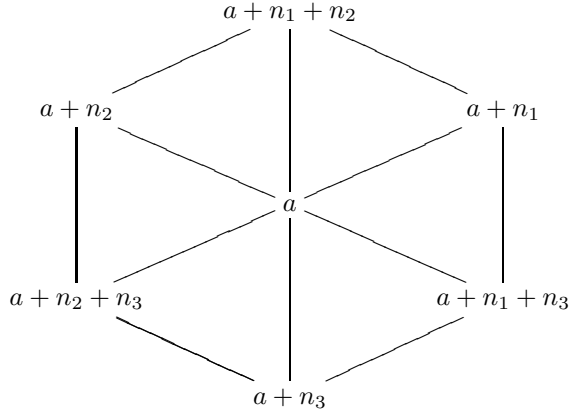
\begin{figure}
\[
\xymatrix{
& a+n_1+n_2 
\ar@{-}[dl]
\ar@{-}[dd]
\ar@{-}[dr]
& 
\\
a+n_2
\ar@{-}[dr]
&&
a+n_1
\ar@{-}[dl]
\\
&
a
&
\\
a+n_2+n_3
\ar@{-}[uu]
\ar@{-}[ur]
\ar@{-}[dr]
&&
a+n_1+n_3
\ar@{-}[uu]
\ar@{-}[lu]
\\
&
a+n_3
\ar@{-}[lu]
\ar@{-}[ru]
\ar@{-}[uu]
&
\\
}
\]
\caption{The \edit{cyclic} placement of 2-simplices around the vertex $a$.}
\label{hexagon}
\end{figure}
In this picture, we see each of the six \edit{2-simplices} in the list
above placed around the vertex $\{a\}$. Hence, if all six
\edit{2-simplices} are distinct, the \edit{cyclic} ordering of
\edit{2-simplices} containing each vertex can be accomplished. 
(Figures~\ref{tetra1} and ~\ref{band}
show what can happen when they are not all distinct.)

\edit{We will now show that all the spaces $C(n_1,n_2,n_3)$ which
are surfaces without boundary are orientable.
(The cyclical
ordering does not guarantee this.)
The Euler characteristic (which we have already computed) then
completely determines the topological type of the surface.
To do this we will assign consistent orientations to all 2-simplices in
$C(n_1,n_2,n_3)$.  We begin by noting a  relation between adjacent 2-simplices.
}

\begin{lemma}
\label{Type}
If $n_1<n_2<n_3 \neq N/2$, then \edit{2-simplices} which share an edge must be of opposite Type.
\end{lemma}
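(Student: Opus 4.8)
The plan is to show that the Type of a 2-simplex is nothing but its orientation, recorded as the cyclic order in which the arc lengths $n_1, n_2, n_3$ appear as one travels clockwise around the musical circle, and then to observe that the two faces meeting along any edge induce opposite cyclic orders. First I would record the geometric meaning of the two Types. A Type I simplex $\{k, k+n_1, k+n_1+n_2\}$ cuts the circle into arcs which, read clockwise from $k$, have lengths $(n_1, n_2, n_3)$; an inversion $\{k, k-n_1, k-n_1-n_2\}$ reverses this to the clockwise sequence $(n_1, n_3, n_2)$ (using $-n_1 = n_2 + n_3$ modulo $N$). Because $n_1 < n_2 < n_3$ are distinct, these are the only two cyclic orderings of the three lengths, and they are genuinely different; this is essentially the content of Proposition~\ref{faces2}, which guarantees that each face has a well-defined Type in this case.

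Next I would bring in the two faces sharing a fixed edge. Since $n_3 \neq N/2$ by hypothesis and $n_1, n_2 < N/2$ by Lemma~\ref{lemma1}, every $n_i \neq N/2$, so Lemma~\ref{halftotlemma1} applies with $n_j \neq n_k$: an edge of length $n_i$, written uniquely as $\{a, a+n_i\}$, lies in exactly the two 2-simplices $\{a, a+n_i, a+n_i+n_j\}$ and $\{a, a+n_i, a+n_i+n_k\}$, where $\{i,j,k\} = \{1,2,3\}$. The first has clockwise arc sequence $(n_i, n_j, n_k)$ and the second $(n_i, n_k, n_j)$. These two sequences differ by the transposition of $n_j$ and $n_k$, which exchanges the two cyclic orders of three distinct numbers; hence one face realizes the clockwise order $(n_1, n_2, n_3)$ and the other realizes $(n_1, n_3, n_2)$. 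By the first paragraph, one is therefore Type I and the other Type II, which is exactly the claim.

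The only real obstacle is making the identification ``Type $=$ clockwise orientation'' airtight, since Type is defined algebraically (translation versus inversion) while orientation is geometric. I would pin this down using $-n_1 = n_2 + n_3 \pmod N$ together with the reductions $a + n_i + n_j = a - n_k \pmod N$, which convert each candidate third vertex into a normal form and let me read off the clockwise arc order unambiguously; the hypothesis $n_3 \neq N/2$ is precisely what makes the two endpoints $a$ and $a+n_3$ of an $n_3$-edge distinguishable, so that assigning a clockwise orientation is well defined. Everything else is bookkeeping already carried out in Lemma~\ref{halftotlemma1} and Proposition~\ref{faces2}, so the argument reduces to the one-line comparison of the two cyclic orders above.
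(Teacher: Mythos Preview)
Your proposal is correct and follows essentially the same route as the paper: identify Type with the clockwise cyclic order of edge lengths $(n_1,n_2,n_3)$ versus $(n_1,n_3,n_2)$, use Lemma~\ref{halftotlemma1} to list the two faces $\{a,a+n_i,a+n_i+n_j\}$ and $\{a,a+n_i,a+n_i+n_k\}$ sharing an edge of length $n_i$, and observe that their clockwise arc sequences $(n_i,n_j,n_k)$ and $(n_i,n_k,n_j)$ differ by a transposition. The paper simply asserts the Type/orientation identification as the opening sentence of its proof, whereas you flag it as the one point needing care and sketch how to verify it via $-n_1 \equiv n_2+n_3$; otherwise the arguments are the same.
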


\begin{proof}
A \edit{2-simplex} is considered to be \edit{Type I} if the edge \edit{lengths have cyclic order
$n_1,n_2,n_3$ when read in clockwise order}. \edit{A 2-simplex is considered to be Type II if the
edge lengths have cyclic order $n_1,n_3,n_2$ when read in clockwise order}. Since $n_3 \neq N/2$,
each edge of length $n_i$ can be written uniquely as $\{a,a+n_i\}$. \edit{By
Lemma~\ref{halftotlemma1}}, there are \edit{two 2-simplices} sharing this edge,
$\{a,a+n_i,a+n_i+n_j\}$ and
$\{a,a+n_i,a+n_i+n_k\}$. However, this first \edit{2-simplex} has edge lengths $(n_i,n_j,n_k)$ and the second has $(n_i,n_k,n_j)$. Hence, whenever we have a \edit{2-simplex} of either Type, any \edit{2-simplex} sharing an edge with it will be of \edit{the} opposite \edit{Type}.
\end{proof}

\edit{This generalizes the duality of major and minor triads mentioned
earlier in Lemma~\ref{defin}. On the other hand, if any two edge lengths are equal, then the two
types coincide, as shown in the proofs of Propositions~\ref{faces1} and~\ref{faces3}.}

\begin{figure}
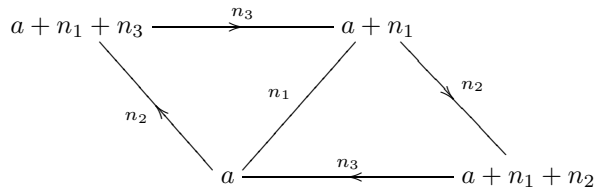

\[
\xy (0,20)*{a+n_1+n_3};(40,20)*{a+n_1};(20,0)*{a};(60,0)*{a+n_1+n_2};
(32,3)*{};

\ar^>{n_3} (10,20);(22,20) 
\ar@{-} (22,20);(34,20) 
\ar@{-<}_>{n_2} (3,18);(10.5,9.5)
\ar@{-} (10.5,9.5);(18,1)
\ar@{-}^>{n_1} (22,1);(29.5,9.5)
\ar@{-} (29.5,9.5);(37,18)
\ar@{-<}^>{n_3} (22,0);(36,0)
\ar@{-} (36,0);(50,0)
\ar@{->}^>{n_2} (43,18);(50,10.5)
\ar@{-} (50,10.5);(57,3)
\endxy
\]

\caption{The orientation of our \edit{2-simplices}. Note the cancellation along the common edge.}
\label{orient}
\end{figure}


\begin{theorem}
\label{tori}
If $n_1<n_2<n_3 \neq N/2$, then $C(n_1,n_2,n_3)$ is the disjoint union of tori.
\end{theorem}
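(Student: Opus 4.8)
The plan is to show that every connected component of $C(n_1,n_2,n_3)$ is a compact, connected, orientable surface without boundary of Euler characteristic $0$, and then to invoke the classification of compact orientable surfaces, which forces each such component to be a torus. The disjoint-union statement will then follow by splitting off the greatest common divisor.

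First I would record that the hypotheses place us in the first row of the chart of Theorem~\ref{euler}: by Lemma~\ref{lemma2} the assumption $n_3 \neq N/2$ is equivalent to $n_3 \neq n_1 + n_2$, so $\chi\bigl(C(n_1,n_2,n_3)\bigr) = 0$. Next I would verify the two defining properties of a surface without boundary. Property (i), that each edge lies in exactly two $2$-simplices, is immediate from Theorem~\ref{edges}: the three lengths are pairwise distinct since $n_1 < n_2 < n_3$, and none equals $N/2$ because $n_1, n_2 < N/2$ by Lemma~\ref{lemma1} and $n_3 \neq N/2$ by hypothesis, so every edge falls into the entry giving exactly $2$. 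Property (ii), the cyclic arrangement of Figure~\ref{hexagon}, requires that the six $2$-simplices listed just before that figure be genuinely distinct, equivalently that the six outer vertices $a+n_1,\,a+n_2,\,a+n_3,\,a+n_1+n_2,\,a+n_1+n_3,\,a+n_2+n_3$ be pairwise distinct and distinct from $a$ modulo $N$. Almost all of the required inequalities follow at once from $n_1 < n_2 < n_3$; the only three that need the size hypotheses are $a+n_1 = a+n_2+n_3$, $a+n_2 = a+n_1+n_3$, and $a+n_3 = a+n_1+n_2$, which force $n_1 = N/2$, $n_2 = N/2$, and $n_3 = N/2$ respectively. The first two are excluded by Lemma~\ref{lemma1}, and the third is excluded by the standing hypothesis $n_3 \neq N/2$. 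This is exactly the point at which $n_3 \neq N/2$ is needed, and it confirms that $C(n_1,n_2,n_3)$ is a surface without boundary.

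The core of the argument is orientability, and here I would build a global orientation out of Lemma~\ref{Type}, which says that edge-adjacent $2$-simplices always have opposite Type. I would orient each Type I simplex $\{a, a+n_1, a+n_1+n_2\}$ by the rotational sense $a \to a+n_1 \to a+n_1+n_2 \to a$ and each Type II simplex by the reverse sense, exactly as drawn in Figure~\ref{orient}. Because any shared edge borders one simplex of each Type, the two orientations it inherits are opposite and therefore cancel; performing this consistently over all edges produces a coherent orientation of the whole complex. The main obstacle is to see that this prescription really is globally consistent rather than merely locally so, but Lemma~\ref{Type} reduces the verification to the single local picture of Figure~\ref{orient}, so no monodromy can arise.

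Finally I would assemble the pieces. Each component is a finite, hence compact, connected, orientable surface without boundary with $\chi = 0$, so by the classification of compact orientable surfaces it is a torus. To obtain the number of components, set $d = \gcd(n_1,n_2,n_3)$; Theorem~\ref{disj} gives $C(n_1,n_2,n_3) = \coprod_{d} C(n_1/d,\, n_2/d,\, n_3/d)$. Each factor has coprime step lengths, hence is connected by Corollary~\ref{connected}, and dividing by $d$ preserves both the strict inequalities $n_1/d < n_2/d < n_3/d$ and the condition $n_3/d \neq (N/d)/2$, so each factor again satisfies the present hypotheses and is therefore a torus. Consequently $C(n_1,n_2,n_3)$ is a disjoint union of $d$ tori, as claimed.
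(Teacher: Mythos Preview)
Your proposal is correct and follows essentially the same route as the paper's proof: verify the edge condition via Theorem~\ref{edges}, verify the cyclic link condition via the hexagon of Figure~\ref{hexagon}, orient via Lemma~\ref{Type} and Figure~\ref{orient}, and conclude with $\chi=0$ and the classification of compact orientable surfaces. Your treatment of the hexagon step is in fact more careful than the paper's (you explicitly check the pairwise distinctness of the six outer vertices, isolating precisely where $n_3\neq N/2$ enters), and your final assembly via Theorem~\ref{disj} and Corollary~\ref{connected} is a minor reordering of the paper's argument that $\chi$ divides by $d$; one small quibble is that ``equivalently'' in your hexagon step slightly overstates matters, since distinctness of the six $2$-simplices is weaker than distinctness of the six outer vertices, but it is the latter that you actually need and actually prove.
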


\begin{proof}
Theorem~\ref{edges} implies that our space satisfies the condition
that \edit{each} edge \edit{is} contained in exactly \edit{two 2-simplices}. 
The 
\edit{cyclic} ordering of the \edit{2-simplices} containing a vertex $\{a\}$
is evident in Figure~\ref{hexagon}, once we observe that the condition
$n_1 < n_2 < n_3$ implies that all six of the 2-simplices containing vertex $a$
are distinct.  Thus, $C(n_1,n_2,n_3)$ is a surface.

We now orient 
\edit{all the 2-simplices} in a consistent fashion, \edit{by taking
the orientation of the edges to be by length, from least to greatest.
}
This is a consistent orientation by
Lemma~\ref{Type}, as one can easily see in
Figure~\ref{orient}.
\edit{
The figure shows only the comparison between simplices which share an
edge of length $n_1$,  but the other two cases are similar.
Thus, $C(n_1,n_2,n_3)$ is an orientable surface.
}

Now recall that the Euler characteristic of $C(n_1,n_2,n_3)$ is
0 by Theorem~\ref{euler}, \edit{and, if $d = \gcd(n_1,n_2,n_3)$
then by Theorem~\ref{disj}, this Euler characteristic is $d$ times
the Euler characteristic of each of the components,
so that each component of $C(n_1,n_2,n_3)$ must
also have Euler characteristic 0}.
\edit{By the classification
of compact, connected, orientable surfaces (described for example
in~\cite{Pressley})}, we see that any compact, connected, orientable surface with
Euler characteristic 0 is a torus. Hence, $C(n_1,n_2,n_3)$ is
the disjoint union of tori.
\end{proof}

When the \edit{edge lengths of the 2-simplices are not all distinct}, we can still have a surface.

\begin{theorem}
\label{2-sphere}
If $n_1=n_2$ and $n_3=N/2$, then \edit{$n_3 = 2n_1$ and} $C(n_1,n_1,2n_1)$ is
the disjoint union of $n_1$ spheres.
\end{theorem}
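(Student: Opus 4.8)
\emph{Proof proposal.} The plan is to reduce the general statement to the single elementary complex $C(1,1,2)$ and then to identify the latter by hand as the boundary of a tetrahedron. First I would dispose of the arithmetic: combining $n_3 = N/2$ with $N = n_1 + n_2 + n_3 = 2n_1 + n_3$ (using $n_1 = n_2$) forces $2n_3 = 2n_1 + n_3$, hence $n_3 = 2n_1$ and $N = 4n_1$. So the complex in question is exactly $C(n_1,n_1,2n_1)$, and the claimed value $n_3 = 2n_1$ is immediate.

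Next I would invoke the decomposition already proved. Writing $(n_1,n_1,2n_1) = (n_1\cdot 1,\, n_1\cdot 1,\, n_1\cdot 2)$ and applying Theorem~\ref{disj} with $d = n_1$ gives
\[
  C(n_1,n_1,2n_1) = \coprod_{n_1} C(1,1,2).
\]
Since $\gcd(1,1,2) = 1$, Corollary~\ref{connected} shows each copy of $C(1,1,2)$ is connected, so these $n_1$ copies are precisely the connected components and there are exactly $n_1$ of them (consistent with $H_0 = \Z^{\gcd(n_1,n_1,2n_1)} = \Z^{n_1}$). This reduces the theorem to the single claim that $C(1,1,2)$ is homeomorphic to a sphere.

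Finally I would identify $C(1,1,2)$ combinatorially. Its vertex set is $\Z/4 = \{0,1,2,3\}$, and by Proposition~\ref{faces3} (the case $n_1 = n_2 < n_3$) the face set $\cF$ has exactly $N = 4$ elements; listing the translations and inversions of $\{0,1,2\}$ shows these four faces are $\{0,1,2\}$, $\{1,2,3\}$, $\{0,2,3\}$ and $\{0,1,3\}$, that is, all four three-element subsets of $\{0,1,2,3\}$. Moreover Theorem~\ref{edges} shows every edge, whether of length $1$ or of length $2 = N/2$, lies in exactly two faces, so all six edges of the complete graph on four vertices are present and the complex is a closed surface. Hence $C(1,1,2)$ is precisely the boundary of the $3$-simplex on $\{0,1,2,3\}$, which is homeomorphic to $S^2$; the Euler characteristic $\chi = 4 - 6 + 4 = 2$ (agreeing with the fourth row of Theorem~\ref{euler}) serves as a consistency check.

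The only real content lies in this last finite verification: the crux is confirming that the four $T/I$-images of $\{0,1,2\}$ really exhaust \emph{all} four triangles on the four vertices (so no triangle is missing and no solid $3$-cell is glued in). Once this is checked, the homeomorphism $C(1,1,2)\cong S^2$ is immediate and the disjoint-union statement for $C(n_1,n_1,2n_1)$ follows at once from the previous paragraph.
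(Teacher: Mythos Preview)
Your proposal is correct and follows essentially the same route as the paper: reduce to $C(1,1,2)$ via Theorem~\ref{disj}, list the four faces explicitly, and recognize them as the boundary of the tetrahedron on $\{0,1,2,3\}$. You add some welcome detail---explicitly invoking Corollary~\ref{connected}, Proposition~\ref{faces3}, Theorem~\ref{edges}, and the Euler-characteristic check---but the argument and its key step are the same as the paper's.
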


\begin{proof}
The fact that $n_3 = 2n_1$ follows directly from Lemma~\ref{lemma2}.
It follows from Theorem~\ref{disj} that we only need to compute $C(1,1,2)$. In $C(1,1,2)$, we have 4 faces: $\{0,1,2\}$, $\{0,1,3\}$, $\{0,2,3\}$, and $\{1,2,3\}$. Gluing these faces along common edges, we see the space $C(1,1,2)$ in
Figure~\ref{tetra1}.

\edit{These four 2-simplices are the boundary of the
tetrahedron with vertices $\{0,1,2,3\}$}, and the boundary of a tetrahedron
is topologically equivalent to a (2-dimensional) sphere.
We note the nice ordering around all the vertices, just as described above.
\end{proof}

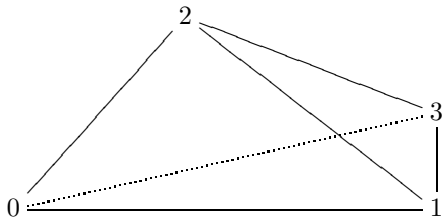
\begin{figure}
\[
\xymatrix{
&&
2
\ar@{-}[ddll]
\ar@{-}[drrr]
\ar@{-}[ddrrr]
&& \\
&&&&&
3
\ar@{-}[d] 
 \\
0
\ar@{-}[rrrrr]
\ar@{.}[rrrrru]
&&&&&
1 \\
}
\]
\caption{A picture of $C(1,1,2)$, a tetrahedron boundary.}
\label{tetra1}
\end{figure}

\begin{theorem}
\edit{If $C(n_1,n_2,n_3)$ is a surface without boundary, then it is either a disjoint union of tori
or a disjoint union of spheres}.
\label{surfall}
\end{theorem}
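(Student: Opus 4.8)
The plan is to treat this statement as a bookkeeping consequence of Theorem~\ref{edges} together with Theorems~\ref{tori} and~\ref{2-sphere}. The driving observation is that a two-dimensional simplicial complex can be a surface without boundary only if \emph{every} edge lies in exactly two $2$-simplices: an edge lying in a single $2$-simplex forces a boundary point (so the space would have boundary), while an edge lying in four $2$-simplices produces a singularity (so the space is not a surface at all). Since we are given that $C(n_1,n_2,n_3)$ \emph{is} a surface without boundary, condition (i) from the discussion preceding Lemma~\ref{halftotlemma1} must hold. So first I would impose the requirement that, for each length $n_i$ actually occurring among $n_1,n_2,n_3$, the corresponding entry of the chart in Theorem~\ref{edges} equals $2$, and read off exactly which triples $(n_1,n_2,n_3)$ survive.

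Recall from Lemma~\ref{lemma1} that $n_1,n_2 < N/2$, so only $n_3$ can equal $N/2$; I would therefore split into two cases. If $n_3 \neq N/2$, then by the chart an edge of length $n_i$ lies in exactly two $2$-simplices precisely when the other two lengths $n_j, n_k$ differ. Running through the possible coincidences, an equality $n_1 = n_2$ forces the $n_3$-edges into a single $2$-simplex, and an equality $n_2 = n_3$ forces the $n_1$-edges into a single $2$-simplex; either way a boundary edge appears. Hence in this case all three lengths must be distinct, $n_1 < n_2 < n_3 \neq N/2$, which is exactly the hypothesis of Theorem~\ref{tori} and yields a disjoint union of tori. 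If instead $n_3 = N/2$ (equivalently $n_3 = n_1 + n_2$ by Lemma~\ref{lemma2}), the $n_1$- and $n_2$-edges automatically lie in two $2$-simplices, while the $n_3$-edges lie in two $2$-simplices precisely when $n_1 = n_2$ and in four otherwise. Thus the only surviving subcase is $n_1 = n_2$ with $n_3 = 2n_1 = N/2$, which is the hypothesis of Theorem~\ref{2-sphere} and yields a disjoint union of spheres.

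Combining the two cases, the only triples for which every edge of $C(n_1,n_2,n_3)$ lies in exactly two $2$-simplices are $n_1 < n_2 < n_3 \neq N/2$ and $n_1 = n_2,\, n_3 = N/2$, giving tori and spheres respectively; every other pattern of equalities produces either a boundary edge or a singular (tritone) edge, and so cannot be a surface without boundary. The main obstacle here is purely organizational rather than conceptual: one must be careful to test every pattern of equalities among $n_1, n_2, n_3$ against the chart of Theorem~\ref{edges}, and to invoke Lemma~\ref{lemma1} at the outset to rule out $n_1 = N/2$ and $n_2 = N/2$, so that the dichotomy $n_3 = N/2$ versus $n_3 \neq N/2$ is genuinely exhaustive. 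Sufficiency — that these two surviving families really are surfaces without boundary of the claimed type — requires no new work, since it is already established in Theorems~\ref{tori} and~\ref{2-sphere}.
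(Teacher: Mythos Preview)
Your proposal is correct and follows essentially the same approach as the paper: both arguments use the requirement that every edge lie in exactly two $2$-simplices, read off from Theorem~\ref{edges} (after invoking Lemma~\ref{lemma1} to rule out $n_1 = N/2$ and $n_2 = N/2$) which triples survive, and then defer to Theorems~\ref{tori} and~\ref{2-sphere}. The paper organizes the casework by phrasing the chart as two mutually exclusive ``Conditions'' applied to each $n_i$, whereas you split first on whether $n_3 = N/2$; the content is the same.
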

\begin{proof}
\edit{Suppose $C(n_1,n_2,n_3)$ is a surface without boundary. Since $C(n_1,n_2,n_3)$ is a surface without boundary, each edge is contained in exactly two 2-simplices. Hence, by Theorem~\ref{edges}, each edge length $n_i$ satisfies exactly one of the following two Conditions.}
\begin{enumerate}
  \item \edit{$n_i = N/2$ and $n_j = n_k$}
  \item \edit{$n_i \neq N/2$ and $n_j \neq n_k$}
\end{enumerate}
\edit{By Lemma~\ref{lemma1}, $n_1 < N/2$ and $n_2 < N/2$, so Condition (2) 
applies to $n_1$ and $n_2$, and we have $n_1 \neq n_3$ and $n_2 \neq n_3$. This, 
combined with our standing assumption $1 \leq n_1 \leq n_2 \leq n_3 < N$, implies 
that $1 \leq n_1 \leq n_2 < n_3 < N$.}

\edit{The integer $n_3$, on the other hand, is either equal to $N/2$ 
or not equal to $N/2$. If $n_3 = N/2$, then we are in Condition (1), 
and $n_1 = n_2$, so Theorem~\ref{2-sphere} applies and $C(n_1,n_2,n_3)$ is 
a disjoint union of spheres. If $n_3 \neq N/2$, then we are in 
Condition (2), and $n_1 \neq n_2$, so we have $1 \leq n_1 < n_2 < n_3 
\neq N/2$, precisely the situation of Theorem~\ref{tori}, and 
$C(n_1,n_2,n_3)$ is a disjoint union of tori.}
\end{proof}

\subsection{Surfaces with Boundary}
\label{surfwbdy}
The next simplest kind of space is a surface with boundary. In order for
$C(n_1,n_2,n_3)$ to have a boundary, \edit{there must be an edge contained in
exactly one 2-simplex}. 
\edit{In this section,
we classify all such spaces whose connected components are either
a 2-simplex, cylinder, or M\"obius band. As
before, we consider a general vertex $a$ and compute the 2-simplices
which contain it. We attach the 2-simplices along equal edges and
determine the resulting spaces.}

\begin{theorem}
\label{3cases}
There are only 3 cases in which $C(n_1,n_2,n_3)$ is a surface with boundary:
\begin{enumerate}
\item $C(n,n,n)$.
\item $C(n,n,n+k)$, with $n+k \neq N/2$.
\item $C(n,n+k,n+k)$.
\end{enumerate}

\end{theorem}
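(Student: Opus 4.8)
The plan is to read the classification directly off the edge-counting chart of Theorem~\ref{edges}. Recall that a two-dimensional simplicial complex is a surface with boundary exactly when (i) every edge lies in one or two $2$-simplices---never in four, which would create a singularity---and (ii) the $2$-simplices around each vertex can be arranged cyclically or linearly; moreover it is a surface \emph{with} boundary, as opposed to without, precisely when at least one edge lies in a single $2$-simplex. So the whole argument reduces to translating ``at most two everywhere, and exactly one somewhere'' into conditions on $n_1,n_2,n_3$. From the chart, an edge of length $n_i$ lies in four $2$-simplices iff $n_i=N/2$ and $n_j\neq n_k$, and it lies in exactly one $2$-simplex iff $n_i\neq N/2$ and $n_j=n_k$. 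By Lemma~\ref{lemma1} only $n_3$ can equal $N/2$, so the forbidden four-fold edge occurs exactly when $n_3=N/2$ and $n_1\neq n_2$, while a boundary edge exists exactly when two of the three lengths coincide.

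With these two observations in hand, I would case-split on the equality pattern of $n_1\le n_2\le n_3$. If $n_1=n_2=n_3$ then $N=3n_1$, every edge has equal complementary lengths and $n_1=N/3\neq N/2$, so each edge sits in a single $2$-simplex; this is $C(n,n,n)$, case (1). If $n_1=n_2<n_3$, the edges of length $n_1=n_2$ have unequal complements and so lie in two $2$-simplices, while the edge of length $n_3$ has equal complements and is therefore a boundary edge exactly when $n_3\neq N/2$; by Lemma~\ref{lemma2} this coincides with $n_3\neq n_1+n_2$, and it simultaneously rules out any four-fold edge, giving case (2), $C(n,n,n+k)$ with $n+k\neq N/2$. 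If $n_1<n_2=n_3$, then Lemma~\ref{lemma1} forces $n_3=n_2<N/2$, so no edge is four-fold, and the edge of length $n_1$ (whose complements $n_2=n_3$ agree, with $n_1\neq N/2$) is always a boundary edge; this is case (3), $C(n,n+k,n+k)$.

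Finally I would eliminate the remaining pattern $n_1<n_2<n_3$: here no two lengths agree, so by the chart no edge can lie in exactly one $2$-simplex and there is no boundary at all. Depending on whether $n_3=N/2$, such a complex is a disjoint union of tori by Theorem~\ref{tori} (a surface without boundary) or else has four-fold edges and is not a surface; in neither subcase is it a surface with boundary. This exhausts the possibilities and leaves precisely the three listed cases. In each surviving case I would confirm property (ii) by inspecting the hexagon of Figure~\ref{hexagon}: when two of the lengths coincide some of the six surrounding $2$-simplices merge, and the cyclic arrangement degenerates into a chain of three, which is exactly the linear ordering characteristic of a boundary vertex, with the two ends of the chain meeting the boundary edges.

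I expect the only delicate point---and the natural place for the argument to slip---to be the dividing line between surface-with-boundary and surface-without-boundary in the regime $n_1=n_2<n_3$, where everything turns on the dichotomy $n_3=N/2$ versus $n_3\neq N/2$ supplied by Lemma~\ref{lemma2}. It is precisely the exclusion $n_3=N/2$ that prevents the sphere case of Theorem~\ref{2-sphere} from masquerading as a surface with boundary.
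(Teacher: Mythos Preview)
Your argument is correct and follows essentially the same route as the paper: read off Theorem~\ref{edges} to see that a boundary edge forces $n_j=n_k$ for some pair, then case-split on which equalities hold among $n_1\le n_2\le n_3$, with the converse deferred (in the paper, to Theorems~\ref{triangle} and~\ref{nnnplusk}; in your sketch, to the degenerated hexagon). One small imprecision: in the $n_1=n_2<n_3$ case you say the condition $n_3\neq N/2$ ``simultaneously rules out any four-fold edge,'' but in fact four-fold edges are already impossible there regardless of $n_3$, since the complements of an $n_3$-edge are $n_1=n_2$ and land in the ``$n_j=n_k$'' row of the chart; the role of $n_3\neq N/2$ is solely to guarantee a boundary edge, separating this case from the sphere of Theorem~\ref{2-sphere}.
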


\begin{proof}
\edit{
Suppose $C(n_1,n_2,n_3)$ is a surface without boundary.
We must have at least one edge which is contained in only one 2-simplex,
and we must not have any edges contained in more than two 2-simplices.
Theorem~\ref{edges} implies that we must have $n_i \neq N/2$ and
$n_j = n_k$ for some arrangement of the indices $1$, $2$, and $3$ as $i$, $j$ and $k$.
This implies that either $n_1 = n_3$, $n_1 = n_2$ or $n_2 = n_3$.
If $n_1 = n_3$ we are in case (1) since $n_1 \leq n_2 \leq n_3$ forces
all three to be equal.  Since $N=3n$, the condition that $n_2 \neq N/2$
is then automatic.  If $n_1 = n_2 \neq n_3$ then we are in case (2),
with the condition $n_3 \neq N/2$.  (We will find it convenient to write
$n_1 = n_2 = n$ and $n_3 = n+k$ with $k > 0$ in this case.)
If $n_1 \neq n_2 = n_3$, then we have case (3), with the condition $n_1 \neq N/2$
following from Lemma~\ref{lemma1}.  Again, we find it convenient
to write $n_1=n$ and $n_2 = n_3 = n+k$.
}

So far, we have proved that if $C(n_1,n_2,n_3)$ is a surface with boundary, then it must be of the form (1), (2), or (3). In Theorems~\ref{triangle} and \ref{nnnplusk}, we prove that each of these is indeed a surface with boundary.
\end{proof}

\edit{In all three of these cases, it easily follows from Lemma~\ref{defin}
that there is no distinction between Type I and Type II 2-simplices.}

\begin{theorem}
\label{triangle}
The space $C(n,n,n)$ is the disjoint union of n copies of a \edit{2-simplex}.
\end{theorem}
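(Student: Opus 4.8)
The plan is to exploit the disjoint-union structure from Theorem~\ref{disj} together with the explicit description of the faces from Proposition~\ref{faces1}. Since $n_1 = n_2 = n_3 = n$, we have $N = 3n$, and writing $d = n$ we may regard $C(n,n,n)$ as $C(d\cdot 1, d\cdot 1, d\cdot 1)$. Theorem~\ref{disj} then gives the decomposition $C(n,n,n) = \coprod_n C(1,1,1)$, so it suffices to identify $C(1,1,1)$.

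First I would describe $C(1,1,1)$ directly. Here $N = 3$, the vertex set is $\Z/3 = \{0,1,2\}$, and by Proposition~\ref{faces1} there is exactly one face, namely $N/3 = 1$ face. That face is $\{0,1,2\}$, the full vertex set, so its edges are all three two-element subsets. Thus $C(1,1,1)$ is precisely the simplicial complex consisting of all nonempty subsets of a three-element set, which is the definition of a 2-simplex. Combining this with the decomposition above yields the claim.

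To make the geometry transparent, I would also argue directly that the faces of $C(n,n,n)$ carry no gluing. By Proposition~\ref{faces1} every element of $\cF$ has the form $\{k, k+n, k+2n\}$, which is the coset $k + \langle n \rangle$ in $\Z/N$. Since $\langle n \rangle$ has order $3n/\gcd(3n,n) = 3$ and hence index $n$ in $\Z/{3n}$, there are exactly $n$ such cosets, and distinct cosets are disjoint. Consequently no two faces share a vertex, and a fortiori no two faces share an edge. Each face, together with its three vertices and three edges, is a solid triangle, and these triangles meet in no common simplex, so the geometric realization is literally a disjoint union of $n$ triangles.

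The proof presents essentially no obstacle; the one point to verify carefully is that the cosets $k + \langle n \rangle$ are genuinely disjoint, equivalently that $\langle n \rangle$ has index exactly $n$, which is immediate from $|\langle n \rangle| = 3n/\gcd(3n,n) = 3$. This disjointness is exactly what forces the absence of any identifications between the $n$ triangles, and is the conceptual heart of the statement.
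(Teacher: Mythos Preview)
Your proof is correct and follows exactly the paper's approach: reduce via Theorem~\ref{disj} to $C(1,1,1)$ and observe that this is a single 2-simplex. Your additional direct coset argument is sound but redundant, since the disjointness of components is already the content of Theorem~\ref{disj}.
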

\begin{proof}
By Theorem~\ref{disj}, 
\edit{
$C(n,n,n)$ is simply the disjoint union of $n$ copies of $C(1,1,1)$.
}
But $C(1,1,1)$ is \edit{the 2-simplex with vertices $0,1,2 \in \mathbb{Z}/3$.} 
\end{proof}

\edit{In the remaining two cases from Theorem~\ref{3cases},  the 
parity of $N$ completely determines the space.} We may assume that 
the $\gcd(n,k)=1$, otherwise by Theorem~\ref{disj}, we'd be looking
at a disjoint union of component spaces.

\begin{theorem}
\label{nnnplusk}
\edit{Assume that the $\gcd(n,k)=1$ and $n+k \neq N/2$. The spaces $C(n,n,n+k)$ and $C(n,n+k,n+k)$ are cylinders if
$N$ is even and M\"obius bands if $N$ is odd.}
\end{theorem}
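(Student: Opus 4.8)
The plan is to show that in each of the two remaining cases, $C(n,n,n+k)$ and $C(n,n+k,n+k)$, the space is a compact, connected surface with boundary of Euler characteristic $0$, and then to separate the cylinder from the M\"obius band by counting the connected components of the boundary. Since a compact connected surface with boundary and $\chi = 0$ is necessarily either a cylinder (two boundary circles) or a M\"obius band (one boundary circle), the entire content reduces to a count of boundary circles, which I will tie to the parity of $N$.

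First I would assemble the structural facts already available. Connectedness is immediate from Corollary~\ref{connected}, since $\gcd(n,n,n+k) = \gcd(n,n+k,n+k) = \gcd(n,k) = 1$, and compactness holds because the complex is finite. The Euler characteristic is $0$ by Theorem~\ref{euler}: the case $C(n,n,n+k)$ lies in the row $n_1 = n_2 < n_3$ with $n_3 \neq n_1+n_2$ (the condition $n_3 \neq N/2$ being equivalent to $n_3 \neq n_1+n_2$ by Lemma~\ref{lemma2}), while $C(n,n+k,n+k)$ lies in the row $n_1 < n_2 = n_3$; both give $\chi = 0$. To verify the surface-with-boundary property, two checks are needed: that every edge lies in one or two $2$-simplices, and that around every vertex the incident $2$-simplices form an arc or a cycle. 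The first follows from Theorem~\ref{edges}, since $n_1, n_2 < N/2$ (Lemma~\ref{lemma1}) and $n_3 = n+k \neq N/2$, so no edge can meet four faces. For the second I would specialize the hexagon of Figure~\ref{hexagon} to the present symmetry: when two of the $n_i$ coincide, the six listed $2$-simplices collapse to three, and a direct check shows these three are distinct and are glued consecutively along shared edges, forming an \emph{arc} rather than a cycle. Hence every vertex is a boundary vertex, and $C$ is a compact connected surface with boundary and $\chi = 0$, so it must be a cylinder or a M\"obius band.

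To decide which, I would count the components of the boundary. The boundary edges are exactly those whose two neighboring step-lengths are equal: these have length $\ell = n+k$ in $C(n,n,n+k)$ and length $\ell = n$ in $C(n,n+k,n+k)$, and all such edges occur (there are $N$ of them, as $\ell \neq N/2$). The vertex-link analysis above also pins down the two free edges at each vertex $a$: in both cases they are precisely $\{a, a-\ell\}$ and $\{a, a+\ell\}$. Thus the boundary, viewed as a graph on the vertex set $\Z/N$, is the circulant with connection set $\{\pm\ell\}$; its components are the cosets of $\langle \ell \rangle$, so their number is $\gcd(\ell, N)$.

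The crux is then a short number-theoretic computation relating this to the parity of $N$. In $C(n,n,n+k)$ one has $N = 3n+k$ and $\gcd(n+k, N) = \gcd(n+k, 2k) = \gcd(n+k, 2)$, using $\gcd(n+k, k) = \gcd(n,k) = 1$; in $C(n,n+k,n+k)$ one has $N = 3n+2k$ and $\gcd(n, N) = \gcd(n, 2k) = \gcd(n, 2)$. Since $N \equiv n+k$ and $N \equiv n \pmod 2$ respectively, in both cases the number of boundary circles equals $\gcd(N, 2)$, which is $2$ when $N$ is even and $1$ when $N$ is odd. This forces a cylinder in the even case and a M\"obius band in the odd case, as claimed. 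The step I expect to be the main obstacle is the vertex-link verification, namely confirming that the three surviving $2$-simplices around each vertex genuinely form an arc and identifying which two edges are free, since this single calculation simultaneously establishes the surface-with-boundary property and determines the boundary graph; the remaining gcd bookkeeping is routine once the hypothesis $\gcd(n,k) = 1$ is in hand.
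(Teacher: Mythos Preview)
Your proposal is correct and follows the same overall arc as the paper: verify that the complex is a connected surface with boundary of Euler characteristic $0$, identify the boundary edges as those of the repeated length $\ell$ (with $\ell = n+k$ in $C(n,n,n+k)$ and $\ell = n$ in $C(n,n+k,n+k)$), and then decide between cylinder and M\"obius band by counting boundary circles as $\gcd(\ell, N)$. The vertex-link check you flag is exactly the one the paper carries out via Figures~\ref{band} and~\ref{band2}.

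The one substantive difference is the number theory. For $C(n,n,n+k)$ the paper proves $\gcd(3n+k,\,n+k)\in\{1,2\}$ by an application of the Snake Lemma (Lemma~\ref{gcd}), whereas your reduction $\gcd(n+k,\,3n+k)=\gcd(n+k,\,2k)=\gcd(n+k,\,2)$, using $\gcd(n+k,k)=\gcd(n,k)=1$, is shorter, entirely elementary, and unifies the two cases under the single formula ``number of boundary circles $=\gcd(N,2)$''. The Snake Lemma buys nothing extra here; your computation also makes the parity link immediate, since $N\equiv \ell\pmod 2$ in both cases.
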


Before we begin the proof, 
\edit{
we use the Snake Lemma from homological algebra to prove a
number theoretic fact which we need.
}

\begin{lemma}[(Snake Lemma)]
\label{snake}
Let
\[
\xymatrix{
0
\ar[r]
&
A
\ar@{->}[r]^f
\ar@{->}[d]^a
&
B
\ar@{->}[r]^g
\ar@{->}[d]^b
&
C
\ar@{->}[r]
\ar@{->}[d]^c
&
0
\\
0
\ar@{->}[r]
&
D
\ar@{->}[r]^{f'}
&
E
\ar@{->}[r]^{g'}
&
F 
\ar[r]
&
0
\\
}
\]
be a commutative diagram of modules with exact rows.
Then there exists an exact sequence relating the kernels and cokernels of $a$, $b$, and $c$:
\begin{equation}
\label{snakeseq}
0 \lra \Ker a \lra \Ker b \lra \Ker c \stackrel{\delta}{\lra} \Cok a \lra \Cok b \lra \Cok c \lra 0.
\end{equation}
\end{lemma}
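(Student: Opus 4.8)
The plan is to follow the standard construction from homological algebra: build the six maps appearing in the sequence (\ref{snakeseq}), then verify exactness at each of the six positions by diagram chasing. I will use the notation of the diagram, with $A$, $B$, $C$ in the top row, $D$, $E$, $F$ in the bottom row, and vertical maps $a$, $b$, $c$.

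First I would define the four \emph{straightforward} maps. The maps $\Ker a \to \Ker b$ and $\Ker b \to \Ker c$ are simply the restrictions of $f$ and $g$; these are well-defined because the relations $b \circ f = f' \circ a$ and $c \circ g = g' \circ b$ carry kernels into kernels. Dually, $f'$ and $g'$ send $\Image a$ into $\Image b$ and $\Image b$ into $\Image c$, so they descend to induced maps $\Cok a \to \Cok b$ and $\Cok b \to \Cok c$ on the quotients.

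The heart of the construction is the connecting homomorphism $\delta \colon \Ker c \to \Cok a$. Given $z \in \Ker c$, I would lift it along the surjection $g$ to some $y \in B$ with $g(y) = z$, observe that $b(y)$ lies in $\Ker g' = \Image f'$ (using $g' \circ b = c \circ g$ and $c(z) = 0$, together with exactness of the bottom row at $E$), and then pull $b(y)$ back along the injection $f'$ to a unique $w \in D$; I set $\delta(z) := [w] \in \Cok a$. The main obstacle is verifying that $\delta$ is \emph{well-defined}, independent of the choice of lift $y$: two lifts differ by an element of $\Ker g = \Image f$, say $y_1 - y_2 = f(x)$, and then $f'(w_1 - w_2) = b(f(x)) = f'(a(x))$, so injectivity of $f'$ forces $w_1 - w_2 = a(x) \in \Image a$ and the classes $[w_1]$, $[w_2]$ agree in $\Cok a$. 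This well-definedness argument, reused almost verbatim, is also what drives exactness at the two positions where $\delta$ appears, and is where essentially all the content of the lemma lies.

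Finally I would check exactness at each of the six positions. Exactness at $\Ker a$ and at $\Cok c$ merely records injectivity of $f|_{\Ker a}$ and surjectivity of the induced map $\Cok b \to \Cok c$, both inherited from the rows. Exactness at $\Ker b$ and at $\Cok b$ follows directly from exactness of the top row at $B$ and the bottom row at $E$. The two remaining cases, exactness at $\Ker c$ (that $\Ker \delta = \Image(\Ker b \to \Ker c)$) and exactness at $\Cok a$ (that $\Image \delta = \Ker(\Cok a \to \Cok b)$), are the substantive diagram chases; each follows the same bookkeeping used to define $\delta$, tracking when a chosen lift can be taken to come from $\Ker b$, respectively when the pulled-back element $w$ becomes a boundary in $\Cok a$.
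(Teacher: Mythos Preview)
Your outline is correct and is the standard diagram-chase proof of the Snake Lemma. The paper, however, does not prove the lemma at all: its entire proof is the citation ``See \cite{Weibel}.'' So the difference is simply that you supply the actual argument while the paper defers to a reference. For a result this classical, either choice is reasonable; your sketch has the advantage of being self-contained, while the paper's citation keeps the exposition focused on the music-theoretic content and avoids reproducing textbook material.
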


\begin{proof}
See \cite{Weibel}.
\end{proof}

\begin{figure}
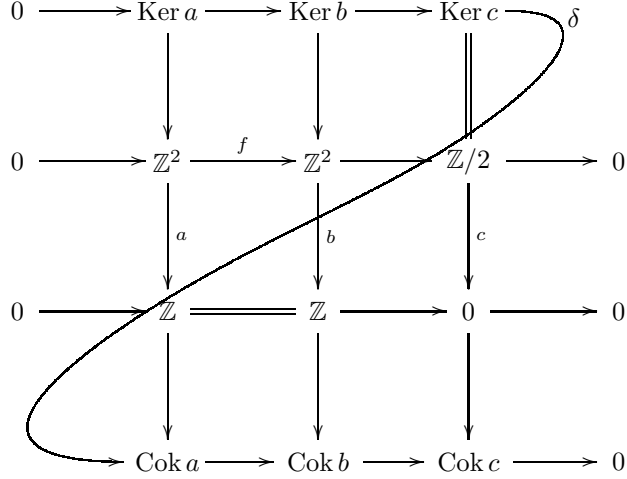

\[
\xy (40,0)*{\Cok b};(60,0)*{\Cok c};(0,60)*{0};
(0,20)*{0};(20,20)*{\Z};(40,20)*{\Z};(60,20)*{0};(80,20)*{0};(74,59)*{\delta};
(0,40)*{0};(20,40)*{\Z^2};(40,40)*{\Z^2};(60,40)*{\Z/2};(80,40)*{0};
(20,60)*{\Ker a};(40,60)*{\Ker b};
(80,0)*{0};
(60,60)*{\Ker c};(20,0)*{\Cok a}; (65,60)*{};(14,0)*{};
**\crv{(-10,0)&(20,25)&(60,40)&(80,60)}
?<*\dir{<} 
\ar@{->} (20,57);(20,43)
\ar@{->} (20,37);(20,23)^{a}
\ar@{->} (20,17);(20,3)

\ar@{->} (40,57);(40,43)
\ar@{->} (40,37);(40,23)^{b}
\ar@{->} (40,17);(40,3)

\ar@{=} (60,57);(60,43)
\ar@{->} (60,37);(60,23)^{c}
\ar@{->} (60,17);(60,3)

\ar@{->} (3,60);(15,60)
\ar@{->} (25,60);(35,60)
\ar@{->} (45,60);(55,60)

\ar@{->} (3,40);(17,40)
\ar@{->} (23,40);(37,40)^{f}
\ar@{->} (43,40);(55,40)
\ar@{->} (65,40);(77,40)

\ar@{->} (3,20);(17,20)
\ar@{=} (23,20);(37,20)
\ar@{->} (43,20);(57,20)
\ar@{->} (63,20);(77,20)

\ar@{->} (25,0);(34,0)
\ar@{->} (46,0);(54,0)
\ar@{->} (66,0);(77,0)
\endxy
\]
\caption{The Snake Lemma used in Lemma~\ref{gcd}}
\label{snakediagram}
\end{figure}

The situation we need, shown in Figure~\ref{snakediagram}, has
\[
f = \begin{bmatrix}3 & 1 \\ 1 & 1 \end{bmatrix}, 
\qquad
b = \begin{bmatrix} n & k \end{bmatrix}, 
\qquad
{\mbox{ and so }}
\quad
a = \begin{bmatrix} 3n+k & n+k \end{bmatrix}.
\]

Therefore, $\Cok{a} = \Z/{\gcd(3n+k,n+k)}$ and
$\Cok{b} = \Z/{\gcd(n,k)}$, \edit{by Proposition~\ref{gcdprop}}.
The $\Z/2$ in Figure~\ref{snakediagram} occurs because the \edit{order of the cokernel of a homomorphism between
free abelian groups is the determinant of the matrix representing the homomorphism 
(when that determinant is nonzero).  Here, the determinant of $f$
is ${\mathrm{det}}(f) = 3\cdot 1 - 1\cdot 1 = 2$.
}

\edit{The Snake Lemma exact sequence (\ref{snakeseq}), is then  the following long exact sequence.}
\[
0 \lra \Z \lra \Z \lra \Z/2 \stackrel{\delta}\lra \Z/{\gcd(3n+k,n+k)} \lra \Z/{\gcd(n,k)}
\lra 0
\]

\begin{lemma} 
\label{gcd}
Let $n_0 = n/\gcd(n,k)$ and $k_0 = k/\gcd(n,k)$.  Then
\[
\gcd(3n+k,n+k) = 
\left\{
\begin{array}{lcl}
2 \gcd(n,k) & & {\mbox{if }}n_0 {\mbox{ and }} k_0 {\mbox{ both odd}}\\
\gcd(n,k) &  &{\mbox{otherwise }}
\end{array}\right.
\]
\end{lemma}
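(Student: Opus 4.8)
The plan is to read the answer directly off the Snake Lemma exact sequence that has already been assembled, extracting the order relation from its cokernel end and then resolving the remaining factor of $2$ at its kernel end. First I would focus on the three right-hand terms,
\[
\Z/2 \stackrel{\delta}{\lra} \Z/\gcd(3n+k,n+k) \stackrel{\pi}{\lra} \Z/\gcd(n,k) \lra 0.
\]
Exactness makes $\pi$ surjective with kernel $\Image\,\delta$, so $\Z/\gcd(3n+k,n+k)$ modulo $\Image\,\delta$ is $\Z/\gcd(n,k)$, which gives the order relation
\[
\gcd(3n+k,n+k) = |\Image\,\delta|\cdot\gcd(n,k).
\]
Because $\Image\,\delta$ is a homomorphic image of $\Z/2$, its order is $1$ or $2$, so $\gcd(3n+k,n+k)$ is either $\gcd(n,k)$ or $2\gcd(n,k)$. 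Everything then reduces to deciding whether $\delta$ is injective ($|\Image\,\delta|=2$) or zero ($|\Image\,\delta|=1$).

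Second, I would pin this down using the left end of the sequence. By exactness at $\Ker c = \Z/2$, the kernel of $\delta$ equals the image of the map $\beta\colon \Ker b \to \Ker c$ induced by the projection $\Z^2 \to \Z^2/\Image f$. Since $\Ker b \cong \Z$ and $\Ker c = \Z/2$, the map $\beta$ is either zero---forcing $\delta$ injective and hence the factor $2$---or surjective---forcing $\delta = 0$ and hence no extra factor. Thus the entire lemma comes down to computing the single map $\beta$.

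Third, I would compute $\beta$ explicitly. The kernel of $b\colon(x,y)\mapsto nx+ky$ is the free rank-one group generated by the primitive vector $(k_0,-n_0)$, where $n_0=n/\gcd(n,k)$ and $k_0=k/\gcd(n,k)$. The cokernel $\Z^2/\Image f$, with $f=\left[\begin{smallmatrix}3&1\\1&1\end{smallmatrix}\right]$, is identified with $\Z/2$ via $(x,y)\mapsto x+y \bmod 2$, since both generating columns $(3,1)$ and $(1,1)$ map to $0$. Hence $\beta$ sends the generator $(k_0,-n_0)$ to $k_0-n_0\equiv n_0+k_0 \pmod 2$, so $\beta=0$ exactly when $n_0+k_0$ is even. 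As $\gcd(n_0,k_0)=1$ they are never both even, so $n_0+k_0$ is even precisely when both $n_0$ and $k_0$ are odd. Combining with the order relation yields $2\gcd(n,k)$ when $n_0,k_0$ are both odd and $\gcd(n,k)$ otherwise, which is the claim.

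I expect the third step to be the main obstacle: one must correctly identify the primitive generator of $\Ker b$ and the explicit isomorphism $\Z^2/\Image f \cong \Z/2$, then track the parity of $n_0+k_0$ through to the two cases. (As a sanity check one can note the direct Euclidean computation: $(3n+k)-(n+k)=2n$ and $3(n+k)-(3n+k)=2k$, so $\gcd(3n+k,n+k)$ divides $2\gcd(n,k)$, recovering the same dichotomy. I would nonetheless present the Snake Lemma route, since that is the machinery the preceding discussion has prepared.)
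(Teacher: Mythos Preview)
Your proposal is correct and follows essentially the same Snake Lemma route as the paper: both reduce the question to whether $(k_0,-n_0)$ lies in $\Image f$, with the paper checking this by row-reducing the system $f(x,y)^T=(k_0,-n_0)^T$ to obtain the condition ``$3n_0+k_0$ even,'' while you identify $\Z^2/\Image f\cong\Z/2$ via $(x,y)\mapsto x+y\bmod 2$ to obtain the equivalent condition ``$n_0+k_0$ even.'' The two computations differ only cosmetically.
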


\begin{proof}
The image of $\delta$ either has order $1$ or order $2$,
and correspondingly, $\gcd(3n+k,n+k)$ is either $1$  or $2$
times $\gcd(n,k)$.
By exactness of the sequence, the ratio is $2$
if and only if the map $\Z \lra \Z$ of kernels is onto.  Since the
kernel of $b$ is generated by ${\begin{bmatrix} k_0 & -n_0
\end{bmatrix}}^{T}$, this is equivalent to asking whether or not we can solve
\[
\begin{bmatrix} 3 & 1 \\ 1 & 1 \end{bmatrix}
\begin{bmatrix} x \\ y \end{bmatrix}
=
\begin{bmatrix} k_0 \\ -n_0 \end{bmatrix}
\]
for \edit{integers} $x$ and $y$. Row reducing \edit{the} matrix \edit{for $f$} yields the following augmented matrix:
\[
\left[\begin{array}{cr|l} 1 & 1\,\, & \,\,-n_0 \\ 0 & -2\,\, & \,\,k_0+3n_0 \end{array}\right]\mbox{.}
\]
This is solvable in integers, and hence the ratio is 2, if and only if
$k_0+3n_0$ is even. Therefore, the $\gcd(3n+k,n+k) = 2 \gcd(n,k)$ if and
only if $3n_0+k_0$ is even, and $\gcd(3n+k,n+k)= \gcd(n,k)$ if and only
if $3n_0+k_0$ is odd.
\end{proof}

\edit{
We are now prepared to finish the proof of Theorem~\ref{nnnplusk}, completing
the proof of Theorem~\ref{3cases}.
}

\begin{proof}
\edit{
By Theorem~\ref{edges} and the proof of Theorem~\ref{3cases}, each edge is contained in either one or two 2-simplices,
so that the condition on edges for a surface with boundary is satisfied.
Next, we will show that the 2-simplices containing each vertex have a linear
order, verifying the other condition for a surface with boundary, and
showing that all the vertices are on the boundary.
In the process we will  observe that the 2-simplices are arranged as in
Mazzola's harmonic strip, and hence form either a M\"obius band or cylinder.
}

\edit{
We consider  $C(n,n,n+k)$ first.
Eliminating redundancies from the list of 2-simplices following the proof of
Theorem~\ref{edges}, we find that
the following 2-simplices contain the vertex $a$.
}
\begin{enumerate}
\item $\{a,a+n,a+2n\}$
\item $\{a,a+n,a+2n+k\}$
\item $\{a,a+n+k,a+2n+k\}$
\end{enumerate}
\edit{(Recall that  $n_1=n_2=n$ and $n_3=n+k$ here.)}

We can arrange these around the vertex $a$ in a consistent, \edit{linear} fashion,
\edit{as shown in  Figure~\ref{band}.} So we now know that $C(n,n,n+k)$ is a surface
without boundary.
We also note the side lengths, as this will be useful in our analysis. 
\begin{figure}
\[
\xymatrix{
\cdots
&
a+2n
\ar@{-}[rr]^{n+k}
\ar@{-}[ddr]^{n}
&&
a
\ar@{-}[rr]^{n+k}
\ar@{-}[ddl]^{n}
\ar@{-}[ddr]^{n}
&&
a+n+k
&
\cdots
\\
&&&&&&
\\
&
\cdots
&
a+n
\ar@{-}[rr]^{n+k}
&&
a +2n + k
\ar@{-}[uur]^n
&
\cdots &
\\
}
\]
\caption{\edit{2-simplices} placed around the vertex $a$ in $C(n,n,n+k)$.}
\label{band}
\end{figure}
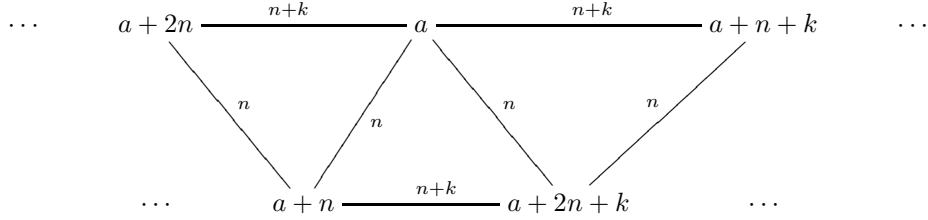

Note that the boundary edges are those of length $n+k$,
since each edge of length $n_3 = n+k \neq N/2$ 
lies in only one 2-simplex, as
shown in Theorem~\ref{edges}.

The space $C(n,n,n+k)$ can be formed by starting with a \edit{2-simplex}
$\{a,a+n,a+2n+k\}$ and moving right in Figure~\ref{band} until this
\edit{2-simplex} reappears. (Recall that $\gcd(n,n+k)=1$ implies that
$C(n,n,n+k)$ is connected by Corollary~\ref{connected}, so we know this
will occur.)

The only question remaining is
whether the edges are joined preserving or reversing orientation,
when we return to the starting 2-simplex.
This is the same as asking whether the boundary has one or two components,
since we already know that $C(n,n,n+k)$ is a surface without boundary by~\cite{Pressley}.

Two vertices lie in the same component of the boundary if they can be connected by repeatedly
adding $n+k$ to one to get the other.  
Suppose $N$ is even.
Since $\gcd(N,n+k) = \gcd(3n+k,n+k)=2$, 
there are two cosets for the subgroup generated by $n+k$ in $\Z/N$,
and hence the boundary has two components.  This implies that the
space is a cylinder.  By Lemma~\ref{gcd}, this occurs when $n$ and $k$ are both
odd, so that $N = 3n+k$ is even.

Suppose $N$ is odd.  Since $n$ and $k$ cannot both be even, by
our assumption that $\gcd(n_1,n_2,n_3) = \gcd(n,k) = 1$, the only possibility
is that exactly one of $n$ and $k$ is odd, since
$N$ is odd. In this case the subgroup of $\Z/N$ generated by $n+k$ is
the whole group, so the boundary has only one component, and the space is
a M\"obius band.

For $C(n,n+k,n+k)$, \edit{we again consider the list of 2-simplices
which contain the vertex $a$, eliminating the redundant
cases as before.}

\begin{enumerate}
\item $\{a,a+n,a+2n+k\}$
\item $\{a,a+n+k,a+2n+2k\}$
\item $\{a,a+n+k,a+2n+k\}$
\end{enumerate}
Once again, we arrange these around the vertex $a$, noting that each
edge of length $n$ \edit{is contained in exactly one 2-simplex} by Theorem~\ref{edges}, as
shown in Figure~\ref{band2}.

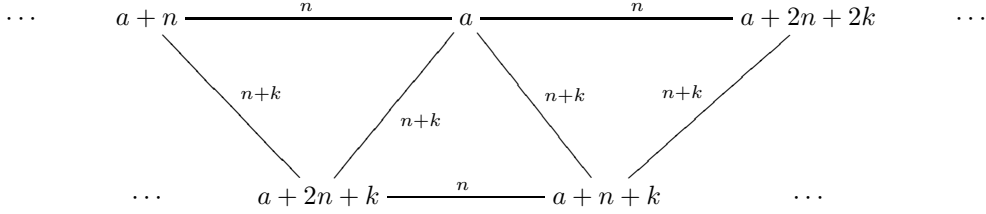
\begin{figure}
\[
\xymatrix{
\cdots
&
a+n
\ar@{-}[rr]^{n}
\ar@{-}[ddr]^{n+k}
&&
a
\ar@{-}[ddl]^{n+k}
\ar@{-}[drd]^{n+k}
\ar@{-}[rr]^{n}
&&
a+2n+2k
&
\cdots
\\
&&&&&&
\\
&
\cdots
&
a+2n+k
\ar@{-}[rr]^{n}
&&
a+n+k
\ar@{-}[ruu]^{n+k}
&
\cdots
&
\\
}
\]
\caption{\edit{2-simplices} placed around the vertex $a$ in $C(n,n+k,n+k)$.}
\label{band2}
\end{figure}

Two vertices lie in the same component of the boundary if they can be
connected by repeatedly adding $n$ to one to get the other.
Hence, we now consider $\gcd(N,n) = \gcd(3n+2k,n)$
to see how many components the boundary has.

Using \edit{the fact that $ \gcd(3b+a,b) = \gcd(a,b) $}, 
we see that $\gcd(3n+2k,n)=\gcd(2k,n)$. 
\edit{
Since $\gcd(n_1,n_2,n_3) = \gcd(n,k) = 1$,
}
\[
\gcd(N,n) =
\gcd(3n+2k,n) = 
\gcd(2k,n) = 
\left\{
\begin{array}{ll}
1 & \quad  {\mbox{if $n$ is odd}}\\
2 & \quad  {\mbox{if $n$ is even}.}
\end{array}\right.
\]
\edit{Since $N$ and $n$ have the same parity, we have one boundary component,
and hence a M\"obius band, when $N$ is odd, and two boundary components, and hence a 
cylinder, when 
$N$ is even.}
\end{proof}

\subsection{Not a surface}
\label{nonsurf}
Using the conditions on the $n_i$ of Theorem~\ref{edges}, 
Theorems~\ref{surfall} and \ref{3cases} imply
we have now classified all of the cases when $C(n_1,n_2,n_3)$ will be
a surface or a surface with boundary.
In the remaining cases,
$n_3=N/2$ and $n_1 \neq n_2$ by
Theorem~\ref{edges}.
By Lemma~\ref{lemma2}, these are the spaces
$C(n_1,n_2,n_1+n_2)$, with $n_1 < n_2$.  By
Theorem~\ref{disj} we may assume that
$\gcd(n_1,n_2)=1$.

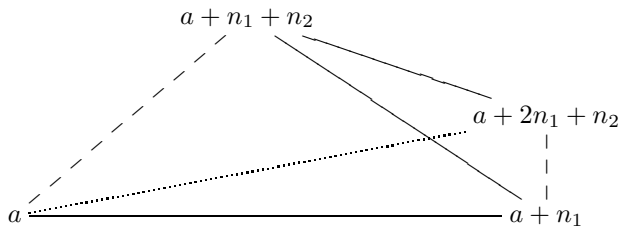
\begin{figure}
\[
\xymatrix{
&&
a+n_1+n_2
\ar@{--}[ddll]
\ar@{-}[drr]
\ar@{-}[ddrr]
& \\
&&&&
a+2n_1+n_2
\ar@{--}[d] 
 \\
a
\ar@{-}[rrrr]
\ar@{.}[rrrru]
&&&&
a+n_1 \\
}
\]
\caption{One tetrahedron boundary in the circle of tetrahedra boundaries.
The tritones are indicated by dashed lines.}
\label{circleoft}
\end{figure}

\begin{theorem}
\label{circleoftetra}
\edit{The simplicial complex} $C(n_1,n_2,n_1+n_2)$ with $n_1<n_2$ is a
circle of $n_1+n_2$ tetrahedra boundaries, each joined to the ones on either side
of it along opposite edges.

\end{theorem}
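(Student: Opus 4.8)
The plan is to exhibit the tetrahedra explicitly, show that they tile the 2-simplices of $C(n_1,n_2,n_1+n_2)$, and then use the hypothesis $\gcd(n_1,n_2)=1$ to show that they are strung together into a single cycle. Throughout I write $N=2(n_1+n_2)$, so that $n_3=n_1+n_2=N/2$ is a tritone, and I assume $\gcd(n_1,n_2)=1$, which is harmless by Theorem~\ref{disj}. Since $n_1<n_2$, Lemma~\ref{halftotlemma1} shows every edge of length $n_1$ or $n_2$ lies in exactly two 2-simplices, while Lemma~\ref{halftotlemma2} shows every tritone lies in exactly four. The tritones are therefore the singular edges, and they will be precisely the edges along which consecutive tetrahedra are glued.

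First I would introduce, for each $b\in\Z/N$, the four-element set $T_b=\{b,\,b+n_1,\,b+n_1+n_2,\,b+2n_1+n_2\}$ and check directly that all six of its pairs are edges (of lengths $n_1$, $n_2$, and $N/2$) and all four of its triples are 2-simplices of $C(n_1,n_2,n_1+n_2)$; this routine computation shows $T_b$ is a tetrahedron boundary. Reading the cyclic gaps $n_1,n_2,n_1,n_2$ around the circle gives the identity $T_{b+n_1+n_2}=T_b$, and since $n_1<n_2$ one checks that these are the only coincidences, so there are exactly $N/2=n_1+n_2$ distinct tetrahedra $T_0,\dots,T_{n_1+n_2-1}$. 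Each has four faces, giving $4\cdot(n_1+n_2)=2N$ faces in all, which matches the face count of Proposition~\ref{faces2}; hence every 2-simplex of $C(n_1,n_2,n_1+n_2)$ lies in exactly one $T_b$, and the tetrahedra account for the entire complex.

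Next I would locate the gluings. The two tritone edges of $T_b$ are $\{b,\,b+n_1+n_2\}$ and $\{b+n_1,\,b+2n_1+n_2\}$; these share no vertex, so they form a pair of opposite edges of the tetrahedron. Fixing the tritone $\{s,\,s+N/2\}$, I would list its four incident faces (their third vertices are $s+n_1$, $s+n_2$, $s-n_1$, $s-n_2$) and observe that two of them, with third vertices $s+n_1$ and $s-n_2$, belong to $T_s$, while the other two belong to $T_{s+n_2}$. Thus $T_s$ and $T_{s+n_2}$ are glued along this tritone, each contributing two of the four faces meeting there. Read the other way, $T_b$ is joined to $T_{b+n_2}$ and to $T_{b-n_2}=T_{b+n_1}$ (indices taken in $\Z/(n_1+n_2)$) along its two opposite tritone edges.

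Finally, I would deduce the global shape. The gluing relation makes $\{T_0,\dots,T_{n_1+n_2-1}\}$ into the circulant graph on $\Z/(n_1+n_2)$ with steps $\pm n_2$, and this graph is a single $(n_1+n_2)$-cycle precisely because $\gcd(n_2,n_1+n_2)=\gcd(n_1,n_2)=1$. Therefore the tetrahedra are arranged in one cycle, each sharing a tritone (an opposite edge) with each of its two neighbors, which is exactly a circle of $n_1+n_2$ tetrahedra boundaries. The main obstacle is the bookkeeping of the middle two steps --- correctly establishing the period $T_{b+N/2}=T_b$ and the exact neighbor rule $T_b\sim T_{b\pm n_2}$ --- since once that adjacency is pinned down, the $\gcd$ hypothesis immediately forces the single-cycle conclusion.
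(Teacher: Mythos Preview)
Your proof is correct and follows essentially the same approach as the paper: both identify the tetrahedra $T_b=\{b,b+n_1,b+n_1+n_2,b+2n_1+n_2\}$, observe that consecutive tetrahedra are glued along tritone edges (which are opposite edges of each tetrahedron), and invoke $\gcd(n_1,n_2)=1$ to conclude the chain closes into a single cycle of length $n_1+n_2$. Your version is somewhat more careful than the paper's --- you explicitly verify via the face count $4(n_1+n_2)=2N$ that the tetrahedra exhaust all 2-simplices, and you phrase the cycle argument as a circulant graph on $\Z/(n_1+n_2)$ --- but the underlying argument is the same.
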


\begin{proof}
In $C(n_1,n_2,n_1+n_2)$, we see that each vertex $a$ lies in a unique
interval of length $N/2$ $\{a,a+n_1+n_2\}$.
By Theorem~\ref{edges}, this edge is contained in exactly
four 2-simplices, two of which are in each of the following
tetrahedra boundaries.

\begin{enumerate}
\item $   \{a,a+n_1,a+n_1+n_2,a+2n_1+n_2\} $ 
\item  $ \{a,a+n_2,a+n_1+n_2,a+n_1+2n_2\}$
\end{enumerate}
All four of the 2-simplices in each of these tetrahedra boundaries
are in $C(n_1,n_2,n_1+n_2)$. 
The intersection
of these two tetrahedra boundaries is exactly the edge corresponding to the tritone
$\{a,a+n_1+n_2\}$. 
The tetrahedra boundaries can thus be envisioned as linking
tritones in a chain in which we add $n_1$ in one direction and add $n_2$
in the reverse direction (see Figure~\ref{chain}).

This chain forms a circle because each tritone lies in only two tetrahedra boundaries,
and since $\gcd(n_1,n_2,n_1+n_2) = 1$, the space must be connected. 
(Alternatively, there are $N/2$ tritones which we can index by the integers
modulo $N/2 =n_1 + n_2$.
Since $n_1$ is relatively
prime to $N/2$, repeatedly adding $n_1$ must exhaust the tritones.)

  \end{proof}


\begin{figure}
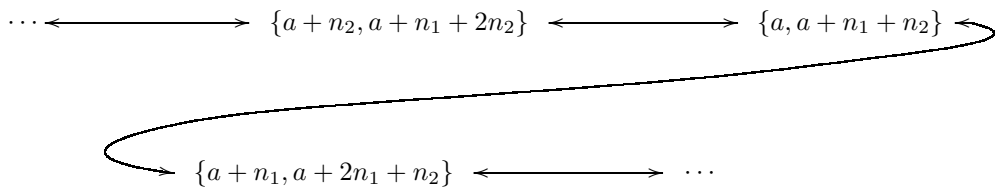

\[
\xy
(0,20)*{\cdots};(50,20)*{\{a+n_2,a+n_1+2n_2\}};
(110,20)*{\{a,a+n_1+n_2\}};(40,0)*{\{a+n_1,a+2n_1+n_2\}};
(90,0)*{\cdots};
(20,0)*{};(126,20)*{};
**\crv{(135,18)&(110,15)&(95,13)&(70,11)&(40,9)&(18,7)&(5,2)}
?>*\dir{>}
\ar@{->}   (126,20);(124,20)
\ar@{<->} (3,20);(30,20)
\ar@{<->} (70,20);(95,20)
\ar@{<->} (60,0);(85,0)
\endxy
\]
\caption{Relation between tritones in a circle of tetrahedra boundaries:
to move right, add $n_1$ to each vertex in the tritone, and
to move left, add $n_2$ to each vertex in the tritone.
}
\label{chain}
\end{figure}

As noted earlier, the circle of tetrahedra boundaries is a quotient of
Peck's Klein bottle \cite{Peck}. In fact, 
the Klein bottle
is a desingularization
of the circle of tetrahedra boundaries.
Precisely,
one obtains the circle of tetrahedra boundaries
by identifying the two distinct copies of each tritone which occur in
Peck's Klein bottles.
The `twist' in the space vanishes when this is done.
\section{Summary and Conclusion}
We have completely classified $C(n_1,n_2,n_3)$ in any case.

\begin{theorem}
\label{sumthm}
The connected components of the generalized {\it Tonnetz} $C(n_1,n_2,n_3)$ are isomorphic and are 2-simplices, tetrahedra boundaries, tori,
cylinders, M\"obius bands, or circles of tetrahedra boundaries. See Figure~\ref{possibilities} for the precise cases. 
\end{theorem}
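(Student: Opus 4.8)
The plan is to treat Theorem~\ref{sumthm} as a synthesis of the results already established, organized by first reducing to the connected case and then performing an exhaustive case analysis on the equalities among $n_1$, $n_2$, $n_3$ together with whether $n_3 = N/2$.

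First I would dispose of the claim that the components are isomorphic. Setting $d = \gcd(n_1,n_2,n_3)$ and writing $n_i = d\,m_i$, Theorem~\ref{disj} gives $C(n_1,n_2,n_3) = \coprod_d C(m_1,m_2,m_3)$, and since $\gcd(m_1,m_2,m_3) = 1$, Corollary~\ref{connected} shows $C(m_1,m_2,m_3)$ is connected. Hence every connected component of $C(n_1,n_2,n_3)$ is a copy of the single connected complex $C(m_1,m_2,m_3)$, so in particular all components are isomorphic. Because the defining relations of Figure~\ref{possibilities} --- the equalities among the $n_i$ and the condition $n_3 = N/2$ --- are all invariant under rescaling by $d$ (note $N = d(m_1+m_2+m_3)$, so $n_3 = N/2$ exactly when $m_3 = (m_1+m_2+m_3)/2$), the case into which $C(n_1,n_2,n_3)$ falls is precisely the case of its component $C(m_1,m_2,m_3)$. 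It therefore suffices to identify this single component.

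Next I would carry out the case analysis, organized according to whether or not $n_3 = N/2$. By Lemma~\ref{lemma2} this is the same as asking whether $n_3 = n_1 + n_2$, and by Lemma~\ref{lemma1} we always have $n_1, n_2 < N/2$, so only $n_3$ can equal $N/2$. When $n_3 \neq N/2$ the complex is a surface with or without boundary, and the equality pattern among the $n_i$ decides which: all three equal gives a $2$-simplex by Theorem~\ref{triangle}; the strict chain $n_1 < n_2 < n_3$ gives a torus by Theorem~\ref{tori}; and exactly two equal ($n_1 = n_2 < n_3$ or $n_1 < n_2 = n_3$) gives, by Theorems~\ref{3cases} and~\ref{nnnplusk}, a cylinder or a M\"obius band according to the parity of $N$. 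When $n_3 = N/2$ there are two subcases: if $n_1 = n_2$ then $n_3 = 2n_1$ and Theorem~\ref{2-sphere} yields a sphere, which is the boundary of a tetrahedron; and if $n_1 < n_2$ then Theorem~\ref{circleoftetra} yields a circle of tetrahedra boundaries. (The degenerate possibility $n_1 = n_2 = n_3 = N/2$ cannot occur, since it would force $N = 3N/2$.)

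The only genuine obstacle here is bookkeeping: I must verify that the case split is truly exhaustive and that it reproduces Figure~\ref{possibilities} line by line, including the identification of the sphere with a tetrahedron boundary and the correct assignment of cylinder versus M\"obius band to the parity of $N$. Since partitioning first on $n_3 = N/2$ and then on the equality pattern among $1 \le n_1 \le n_2 \le n_3$ covers every admissible triple, and each resulting cell is handled by a previously proved theorem, the classification is complete once this matching is confirmed.
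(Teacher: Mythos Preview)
Your proposal is correct and follows essentially the same approach as the paper: the paper's treatment of Theorem~\ref{sumthm} is not a formal proof environment but rather the summary paragraphs immediately following it, which likewise reduce to components via Theorem~\ref{disj} and then dispatch each equality/$n_3=N/2$ configuration to one of Theorems~\ref{tori}, \ref{2-sphere}, \ref{triangle}, \ref{nnnplusk}, and \ref{circleoftetra}. Your version is in fact more explicit about the exhaustiveness of the case split and the invariance of the Figure~\ref{possibilities} conditions under the rescaling $n_i = d\,m_i$, which the paper leaves implicit.
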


\edit{
The generic case is $n_1 < n_2 < n_3$ with $n_3 \neq N/2$, and this
is shown in Theorem~\ref{tori} to give a disjoint union of tori.
When $n_3 = N/2$, we must have $n_3 = n_1+n_2$ by Lemma~\ref{lemma2}.
In the most generic of these cases, $n_1 \neq n_2$, the torus
is `pinched' along edges corresponding to tritones to form a `circle
of tetrahedra boundaries' (Theorem~\ref{circleoftetra}).
These are exactly the cases considered by Peck \cite{Peck}
in his Klein bottle {\em Tonnetz}, which are desingularizations of
ours.  
}

\edit{
In the remaining cases, either $n_1 = n_2 < n_3$, $n_1 < n_2 = n_3$,
or  $n_1 = n_2 = n_3$.  
When $n_1 = n_2$, we have the possibility that $n_3 = N/2$, in which case
we have a disjoint union of individual tetrahedra boundaries
(i.e., 2-spheres) (Theorem~\ref{2-sphere}).  (This can be considered a
limiting case of the circle of tetrahedra boundaries, in which the circle 
has only one tetrahedron boundary.)
More generically, $n_3 \neq N/2$ and we obtain a generalization
of Mazzola's harmonic strip, which we identify as either a
M\"obius band or cylinder, according to the parity of the number of
notes in the octave (Theorem~\ref{nnnplusk}).
When $n_1 < n_2 = n_3$, the possibility that $n_3 = N/2$ no longer exists
and we again have harmonic strips (Theorem~\ref{nnnplusk}).
Finally, if $n_1 = n_2 = n_3$, we are discussing augmented triads
$\{C, E, G^{\sharp}\}$
and their microtonal transpositions (Theorem~\ref{triangle}).
}

\edit{
By re-envisioning the {\em Tonnetz} as a simplicial complex, rather than as
a quotient of a planar region, we have sharpened the focus on the relationships
between the chords in the {\em Tonnetz} by using exactly (and only) the relations
between the intervals in the chords to assemble the space.  
This abstraction allows us to define
generalized {\em Tonnetze} for triads of arbitrary shape in scales 
which divide the octave into any number of steps.
}

\edit{
When the chord has a `common denominator', i.e., in the cases
$C(dn_1,dn_2,dn_3)$,
the resulting {\em Tonnetz} is a disjoint union of identical subspaces
which are generalized {\em Tonnetze} 
$C(n_1,n_2,n_3)$
in their own right. The $T/I$ group then
cycles through these components, while the $P$, $L$, and $R$ operations are
confined to each component.  
}

\edit{
Interestingly, all possibilities except the M\"obius band occur  in
the chromatic scale $\Z/12$, as noted in the Introduction.  (The 
M\"obius band can only occur in scales which divide the octave 
into an odd number of steps.)
}

\section*{Acknowledgement(s)}
I would like to thank Sean Gavin for all his helpful ideas and advice, as well as 
Tom Fiore for his discussions and expertise in both mathematics and music theory.  I
want to thank Robert Bruner for all his guidance and help with this paper.
Without his time and patience, this work would not have been
possible. I would also like to thank a very diligent referee, whose numerous
and detailed suggestions greatly improved the content and readability of this paper.
This work was funded in part by the Office of Undergraduate 
Research of Wayne State University. It was also funded \edit{by}
NSF grant PHY-0851678.

\end{document}